\DeclareMathAlphabet{\mathpzc}{OT1}{pzc}{m}{it}
\begin{document}

\renewcommand*{\bibfont}{\small}
	\newtheorem{proposition}{Proposition}[section]
	\newtheorem{theorem}[proposition]{Theorem}
	\newtheorem{corollary}[proposition]{Corollary}
	\newtheorem{lemma}[proposition]{Lemma}
	\newtheorem{conjecture}[proposition]{Conjecture}
	\newtheorem{question}[proposition]{Question}
	\newtheorem{definition}[proposition]{Definition}
	\newtheorem{comment}[proposition]{Comment}
	\newtheorem{algorithm}[proposition]{Algorithm}
	\newtheorem{assumption}[proposition]{Assumption}
	\newtheorem{condition}[proposition]{Condition}
	\numberwithin{equation}{section}
	\numberwithin{proposition}{section}

\captionsetup[table]{format=plain,labelformat=simple,labelsep=period}

\newcommand{\skp}{\vspace{\baselineskip}}
\newcommand{\noi}{\noindent}
\newcommand{\osc}{\mbox{osc}}
\newcommand{\lfl}{\lfloor}
\newcommand{\rfl}{\rfloor}

\theoremstyle{remark}
\newtheorem{example}{\bf Example}[section]
\newtheorem{remark}{\bf Remark}[section]

\newcommand{\img}{\imath}
\newcommand{\iy}{\infty}
\newcommand{\eps}{\varepsilon}
\newcommand{\del}{\delta}
\newcommand{\Rk}{\mathbb{R}^k}
\newcommand{\RR}{\mathbb{R}}
\newcommand{\spa}{\vspace{.2in}}
\newcommand{\V}{\mathcal{V}}
\newcommand{\E}{\mathbb{E}}
\newcommand{\I}{\mathbb{I}}
\newcommand{\PP}{\mathbb{P}}
\newcommand{\sgn}{\mbox{sgn}}
\newcommand{\ti}{\tilde}

\newcommand{\QQ}{\mathbb{Q}}

\newcommand{\XX}{\mathbb{X}}
\newcommand{\XXz}{\mathbb{X}^0}

\newcommand{\lan}{\langle}
\newcommand{\ran}{\rangle}
\newcommand{\llan}{\llangle}
\newcommand{\rran}{\rrangle}
\newcommand{\lf}{\lfloor}
\newcommand{\rf}{\rfloor}
\def\wh{\widehat}
\newcommand{\defn}{\stackrel{def}{=}}
\newcommand{\txb}{\tau^{\eps,x}_{B^c}}
\newcommand{\tyb}{\tau^{\eps,y}_{B^c}}
\newcommand{\tilxb}{\tilde{\tau}^\eps_1}
\newcommand{\pxeps}{\mathbb{P}_x^{\eps}}
\newcommand{\non}{\nonumber}
\newcommand{\dist}{\mbox{dist}}

\newcommand{\Om}{\mathnormal{\Omega}}
\newcommand{\om}{\omega}
\newcommand{\vph}{\varphi}
\newcommand{\Del}{\mathnormal{\Delta}}
\newcommand{\Gam}{\mathnormal{\Gamma}}
\newcommand{\Sig}{\mathnormal{\Sigma}}

\newcommand{\tilyb}{\tilde{\tau}^\eps_2}
\newcommand{\beq}{\begin{eqnarray*}}
\newcommand{\eeq}{\end{eqnarray*}}
\newcommand{\beqn}{\begin{eqnarray}}
\newcommand{\eeqn}{\end{eqnarray}}
\newcommand{\ink}{\rule{.5\baselineskip}{.55\baselineskip}}

\newcommand{\bt}{\begin{theorem}}
\newcommand{\et}{\end{theorem}}
\newcommand{\deps}{\Del_{\eps}}
\newcommand{\dbl}{\mathbf{d}_{\tiny{\mbox{BL}}}}

\newcommand{\be}{\begin{equation}}
\newcommand{\ee}{\end{equation}}
\newcommand{\ac}{\mbox{AC}}
\newcommand{\hs}{\tiny{\mbox{HS}}}
\newcommand{\BB}{\mathbb{B}}
\newcommand{\VV}{\mathbb{V}}
\newcommand{\DD}{\mathbb{D}}
\newcommand{\KK}{\mathbb{K}}
\newcommand{\HH}{\mathbb{H}}
\newcommand{\TT}{\mathbb{T}}
\newcommand{\CC}{\mathbb{C}}
\newcommand{\ZZ}{\mathbb{Z}}
\newcommand{\SSS}{\mathbb{S}}
\newcommand{\EE}{\mathbb{E}}
\newcommand{\NN}{\mathbb{N}}
\newcommand{\MM}{\mathbb{M}}

\newcommand{\clg}{\mathcal{G}}
\newcommand{\clb}{\mathcal{B}}
\newcommand{\cls}{\mathcal{S}}
\newcommand{\clc}{\mathcal{C}}
\newcommand{\clj}{\mathcal{J}}
\newcommand{\clm}{\mathcal{M}}
\newcommand{\clx}{\mathcal{X}}
\newcommand{\cld}{\mathcal{D}}
\newcommand{\cle}{\mathcal{E}}
\newcommand{\clv}{\mathcal{V}}
\newcommand{\clu}{\mathcal{U}}
\newcommand{\clr}{\mathcal{R}}
\newcommand{\clt}{\mathcal{T}}
\newcommand{\cll}{\mathcal{L}}
\newcommand{\clz}{\mathcal{Z}}
\newcommand{\clq}{\mathcal{Q}}
\newcommand{\clo}{\mathcal{O}}

\newcommand{\cli}{\mathcal{I}}
\newcommand{\clp}{\mathcal{P}}
\newcommand{\cla}{\mathcal{A}}
\newcommand{\clf}{\mathcal{F}}
\newcommand{\clh}{\mathcal{H}}
\newcommand{\N}{\mathbb{N}}
\newcommand{\Q}{\mathbb{Q}}
\newcommand{\bfx}{{\boldsymbol{x}}}
\newcommand{\bfa}{{\boldsymbol{a}}}
\newcommand{\bfh}{{\boldsymbol{h}}}
\newcommand{\bfs}{{\boldsymbol{s}}}
\newcommand{\bfm}{{\boldsymbol{m}}}
\newcommand{\bff}{{\boldsymbol{f}}}
\newcommand{\bfb}{{\boldsymbol{b}}}
\newcommand{\bfw}{{\boldsymbol{w}}}
\newcommand{\bfz}{{\boldsymbol{z}}}
\newcommand{\bfu}{{\boldsymbol{u}}}
\newcommand{\bfell}{{\boldsymbol{\ell}}}
\newcommand{\bfn}{{\boldsymbol{n}}}
\newcommand{\bfd}{{\boldsymbol{d}}}
\newcommand{\bfbeta}{{\boldsymbol{\beta}}}
\newcommand{\bfzeta}{{\boldsymbol{\zeta}}}
\newcommand{\bfnu}{{\boldsymbol{\nu}}}
\newcommand{\bfvarphi}{{\boldsymbol{\varphi}}}

\newcommand{\curvz}{{\bf \mathpzc{z}}}
\newcommand{\curvx}{{\bf \mathpzc{x}}}
\newcommand{\curvi}{{\bf \mathpzc{i}}}
\newcommand{\curvs}{{\bf \mathpzc{s}}}
\newcommand{\blip}{\mathbb{B}_1}
\newcommand{\loc}{\text{loc}}

\newcommand{\BM}{\mbox{BM}}

\newcommand{\tac}{\mbox{\scriptsize{AC}}}



\begin{frontmatter}
\title{Steady-State Behavior of Some Load Balancing Mechanisms in Cloud Storage Systems	
}

 \runtitle{Steady-State Behavior in Cloud Storage Systems}

\begin{aug}
\author{Eric Friedlander\\
}
\end{aug}

\today

\skp

\begin{abstract}
In large storage systems, files are often coded across several servers to improve
reliability and retrieval speed. We consider a system of $n$ servers storing files using a Maximum Distance Separable code (cf. \cite{li2016mean}). Specifically, each file is stored in equally sized pieces
across $L$ servers such that any $k$ pieces can reconstruct the original file. File requests are routed using the Batch Sampling routing scheme. I.e. when a request
for a file is received, a centralized dispatcher routes the job into the $k$-shortest queues among the
$L$ for which the corresponding server contains a piece of the file being requested. 
We study the long time behavior of this class of load balancing mechanisms. 
In particular, it is shown that the ODE system that describes the mean field limit of the occupancy measure process has a unique fixed point which is stable. This fixed point corresponds to a distribution on $\NN_0$ of queue lengths  with tails that decay super-exponentially. Upper and lower bounds on the decay rate are provided. Finally, we show that the unique invariant measure of the Markov occupancy measure process converges to the Dirac measure concentrated at the unique fixed point of the ODE system, establishing the interchangeability of the $t\to\iy$ and $n\to\iy$ limits.

\noi {\bf AMS 2000 subject classifications:} 60K25, 60K30, 60K35, 90B22, 68M20, 90C31.

\noi {\bf Keywords}: Mean field approximations, stochastic networks, propagation of chaos, cloud storage systems, supermarket model, MDS coding, Power-of-d, interchange of limits, stability.
\end{abstract}

\end{frontmatter}

\section{Introduction}
In large data centers, files are often ``coded'' to improve reliability and retrieval speed. In such a system each file is broken down into smaller pieces and distributed across multiple servers. In this work, we consider load balancing mechanisms in a network storing files using a Maximum Distance Separable (MDS) code. We model such a system by considering a system of $n$ servers each maintaining its own first-in-first out (FIFO) queue. $I(n)$ files are stored in the system such that each file is stored on $L$ servers and any subset of size $k$ contains enough information to reconstruct the original file. A stream of jobs requesting files uniformly at random arrive from the outside and are then routed by a centralized dispatcher. We consider a routing scheme known as Batch Sampling (BS) (cf. \cite{li2016mean}) in which incoming file requests are routed to the $k$ shortest queues among the $L$ corresponding to servers containing the requested file. In the special case where $k=1$, this load balancing scheme reduces to the well-studied supermarket model (see \cite{vvedenskaya1996queueing,graham2000chaoticity}).

Batch sampling schemes of the form considered in this work were first studied in \cite{li2016mean} where the authors calculate the steady state $(t\to\iy)$ queue length distribution in the large system limit.
Recently, in \cite{budhiraja2017diffusion}, limit theorems over finite time horizons for the occupancy measure process associated with such systems were established.
Specifically, \cite{budhiraja2017diffusion} proved convergence in probability, on the path space, of the occupancy measure process to a system of ODE.
Furthermore, \cite{budhiraja2017diffusion} studied fluctuations from the deterministic limit by establishing a functional central limit theorem with the limit described through an infinite dimensional linear SDE.
The results in \cite{budhiraja2017diffusion} describe the asymptotic system behavior over any finite time horizon, however they do not address the long time behavior (e.g. stability, properties of the invariant distribution, etc.) of the occupancy process.
In this work our goal is to study the steady state properties of such systems which is of primary relevance when system behavior over a long time horizon is of interest.
We show that the ODE system indentified in \cite{budhiraja2017diffusion} that determines the law of large number (LLN) behavior of the occupancy measure process has a unique fixed point $\bar{u}$ which is stable. Namely, starting from an arbitrary initial condition, the solution to the ODE converges to this fixed point as $t\to\iy$. We also show that the queue length distribution corresponding to the fixed point has tails which decay super-exponentially extending this well known property of the supermarket model (i.e. $k=1$) to a general $k<L$. We give explicit upper and lower bounds (cf. Theorem \ref{thm:decay}) on these tails which are sharp in the sense that they coincide when $k=1$. 
Finally we prove an important interchange of limit property. 
In \cite{li2016mean}, it has been shown that queue length process $Q^n$ for the $n$-server system is positive recurrent and, thus, has a unique invariant probability measure.
This then implies that the occupancy measure process has a unique invariant distribution.
In this work we show that this invariant measure converges to $\del_{\bar{u}}$ in probability, as $n\to\iy$.
Roughly speaking, this result says that the limits $n\to\iy$ and $t\to\iy$ can be interchanged and, in particular, the fixed point of the ODE is a good approximation for the steady state behavior of the occupancy process for large $n$.

The organization of the paper is as follows. In Section \ref{sec:main} we present our main results. The fixed point of the ODE system from \cite{budhiraja2017diffusion}, which was first identified in \cite{li2016mean}, is given in \eqref{eqn:fixedPoint}. Theorem \ref{thm:decay} gives explicit upper and lower bounds on the rate of decay for the tail of the queue length distribution determined by \eqref{eqn:fixedPoint}. In Theorem \ref{thm:stableFluid} we show that \eqref{eqn:fixedPoint} is, in fact, a stable fixed point of the ODE \eqref{eqn:ODE}. Finally, Theorem \ref{thm:interchange} presents the interchange of limits property discussed above. The remainder of the paper is devoted to proving the above results. In Section \ref{sec:equivalenceproof} we prove a lemma which will be needed in the proof of Theorems \ref{thm:stableFluid} and \ref{thm:interchange}. Proof of Theorems \ref{thm:decay}, \ref{thm:stableFluid}, and \ref{thm:interchange} are then given in Sections \ref{sec:decayProof}, \ref{sec:stability}, and \ref{sec:interchange}, respectively.

\subsection{Related Work}
Load balancing mechanisms similar to the type considered here have been studied in many works. Specifically, the Join-the-Shortest-Queue (JSQ), Join-the-Idle-Queue (JIQ), and Power-of-$d$ (also known as the supermarket model)  routing schemes have garnered quite a bit of attention (see \cite{vvedenskaya1996queueing, eschenfeldt2015join,mukherjee2016universality,mitzenmacher2001power,bramson2012asymptotic,stolyar2015pull, graham2000chaoticity,mukherjee2016asymptotic, mukherjee2017asymptotically} and  references therein). The papers \cite{vvedenskaya1996queueing, mitzenmacher2001power} were the first to study the tail behavior of the fixed point of the ODE system associated with the Power-of-$d$ routing scheme, showing that in steady-state the fraction of queues with lengths exceeding $m$
decay super-exponentially in $m$, a large improvement over the exponential rate for the setting where jobs are routed to servers uniformly at random.
Later works on a similar theme include \cite{mukherjee2016universality, mukherjee2016asymptotic, mukherjee2017asymptotically}. 
In all these works, the authors study fluid and diffusion approximations for various types of load balancing mechanisms. In each case, stable fixed points are identified for the LLN limit and the interchangeability of limit property is established.

\subsection{Notation}
The following notation will be used. 
The space of probability measures on a Polish space $\SSS$, equipped with the topology of weak convergence, will be denoted by $\clp(\SSS)$.  When $\SSS = \NN_0$ we will metrize $\clp(\SSS)$ with the metric $d_0$ defined as
$$d_0(\mu, \nu) \doteq \sum_{j=0}^{\infty} \frac{|\mu(j)-\nu(j)|}{2^j}, \; \mu, \nu \in \clp(\NN_0).$$
For $\SSS$-valued random variables $X$, $X_n$, $n\ge 1$, convergence in distribution of $X_n$ to $X$ as $n\to \infty$ will be denoted as $X_n \Rightarrow X$.
Let $\bfell_2 = \{(a_j)_{j=0}^\iy|\sum_{j=0}^\iy a_j^2<\iy\}$  be the Hilbert space of square summable real sequences.
Similarly, $\bfell_1=\{(a_j)_{j=0}^\iy|\sum_{j=0}^\iy|a_j|<\iy\}$ is the Banach space of real summable sequences.
For a real number $a$, $(a)_+$ will denote the positive part of $a$. For $a,b\in\RR$ we define $a\wedge b = \min(a,b)$ and $a\vee b = \max(a,b)$.

\section{Model and Results}\label{sec:main}
We consider a system with $n$ servers each with its own infinite capacity queue. In all, there are $I(n)$ equally sized files  stored over the $n$ servers. Each file is stored in equally sized pieces at $L$ servers such that any $k$ pieces can reconstruct the original file. The files are distributed such that each combination of $L$ servers has exactly $c$ files. This, in particular, implies $I(n)=c\binom{n}{L}$. Jobs arrive from outside  according to a Poisson process with rate $n\lambda$ and request one of the $I(n)$ files uniformly at random. Such a request corresponds to selection of one of the $\binom{n}{L}$ sets of $L$ servers, chosen uniformly at random, which is the set of servers containing the pieces of the requested file. The job is then routed to the $k$ shortest queues among this set of $L$ servers. Each server processes queued jobs according to the first-in-first-out (FIFO) discipline. Processing times at each server are mutually independent and exponentially distribution with mean $k^{-1}$. 

Let $Q^n(t)=\{Q^n_i(t)\}_{i=1}^n$ where $Q^n_i(t)$ represents the length of the $i$-th queue at time $t$ and let $\pi^n(t)=\{\pi^n_i(t)\}_{i\in\NN_0}$ where $\pi_i^n(t)$ represents the proportion of queues with length exactly $i$ at time $t$. This can explicitly be written as
\begin{equation}
\pi^n_i(t)=\frac{1}{n}\sum_{j=1}^n1_{\{Q^n_j(t)=i\}}.\label{eq:eq102}
\end{equation}
It will be convenient to work with the process $u^n(t) = \{u^n_i\}_{i\in\NN_0}$ where $u^n_i(t)$ represents the proportion of queues with length at least $i$. Namely, $u^n_i(t) = \sum_{j=i}^\iy\pi^n_j(t)$.
We will assume for simplicity that $Q^n(0)=q^n$ is nonrandom and thus $\pi^n(0)$ and $u^n(0)$ are nonrandom as well.
We identify $\clp(\NN_0)$ with the infinite dimensional simplex $\cls=\{s\in\RR_+^\iy|\sum_{i=0}^\iy s_i=1\}$ and let $\cls_n=\frac{1}{n}\NN_0^\iy\cap\cls$. 
The spaces $\cls$ and $\cls_n$ can be identified with subsets of $\bar{\clu} = \{u\in\RR_+^\iy|1=u_0\geq u_1\geq \ldots\geq 0\}$ and $\bar{\clu}_n = \{u\in\bar{\clu}|u_i = r_i/n, r_i\in\ZZ\}$, respectively, each endowed with the product metric,
\begin{equation*}
\rho(x,y) \doteq\sum_{j=1}^\iy\frac{|x_j-y_j|}{2^j}.
\end{equation*}  
The identification map $\iota:\cls\to\bar{\clu}$ is defined as $\iota(p)_j\doteq\sum_{k=j}^\iy p_k,\ j\in\NN_0,\ p\in\cls$.
Note that for $p^n,p\in\cls$, $d_0(p^n,p)\to 0$ if and only if $\rho(\iota(p^n),\iota(p))\to0$.
Additionally, note that $\pi^n(t)\in\cls_n$ and $u^n(t)\in\bar{\clu}_n$ for all $t\in[0,T]$. Let $\Sigma=\{\ell=(\ell_i)_{i=1}^L\in\NN^L_0|\ell_1\leq\ell_2\leq\cdots\leq\ell_L\}$ and for $\ell\in\Sigma$ define $\rho_i(\ell)\doteq\sum_{j=1}^L1_{\{\ell_j=i\}},\ i\in\NN_0$. Roughly speaking, $\Sigma$ will represent the set of possible states for $L$ selected queues arranged by non-decreasing queue length. Note that each file will be stored at $L$ servers and that at any given time $t$ the queue lengths of these $L$ servers (up to a reordering) will correspond to an element in $\Sigma$. We will  refer to the elements of $\Sigma$ as ``queue length configurations''. 
Given a configuration $\bfell\in\Sigma$, $\rho_i(\ell)$ gives the number of queues of length $i$ (among the $L$ selected).
From the above description of the system it follows that
the empirical measure process, $\pi^n(t)$, is a continuous time Markov chain (see Section 2 of \cite{budhiraja2017diffusion}) with state space $\cls_n$ and generator
\begin{equation}\label{eqn:generator}
\begin{aligned}
\cll^n g(r)
&= \frac{n\lambda}{\binom{n}{L}}\sum_{\ell\in\Sigma}\left(\prod_{i=0}^\iy\binom{nr_i}{\rho_i(\ell)}\right)\left[g\left(r+\frac{1}{n}\Del_\ell\right)-g(r)\right]\\
&\qquad+ k\sum_{i=1}^\iy nr_i\left[g\left(r+\frac{1}{n}(e_{i-1}-e_i)\right)-g(r)\right],
\end{aligned}
\end{equation}
for $g:\cls_n\to\RR$ where
\begin{equation}\label{eqn:deldef}
\Del_\ell \doteq \sum_{i=1}^k e_{\ell_i+1}-\sum_{i=1}^k e_{\ell_i}
\end{equation}
and for $y\in\NN_0,\ e_y\in\bfell_2$ is a vector with 1 at the $y$-th coordinate and $0$ elsewhere. Here we use the standard conventions that $0^0=\binom{0}{0}=0!=1$, and $\binom{a}{b}=0$ when $a<b$. 
The above generator can be understood as follows. A typical term in the second expression corresponds to a jump as a result of a server, with exactly $i$ jobs queued, completing a job.
The term in the square brackets gives the change in value of $f$ as a result of such a jump and the prefactor $knr_i$ corresponds to the fact that servers process jobs at rate $k$ and there are in all $nr_i$ queues (prior to the jump) with exactly $i$ jobs.
The first expression in \eqref{eqn:generator} corresponds to a jump resulting from an arrival of a job to the system.
Typically, such an arrival makes a request for $L$ servers with queue length configuration
$\ell_1\le \ell_2 \le \cdots\le \ell_L$ and results in the jump $\frac{1}{n}\Del_\ell$. 
The sum in \eqref{eqn:deldef} only goes up to $k$ (instead of $L$) since only the smallest $k$ queues are affected by such a jump.
Since prior to the jump, there are $n r_i$ queues with exactly $i$ jobs, the overall rate associated with the configuration $\ell = \{\ell_1\le \ell_2 \le \cdots\le \ell_L\} \in \Sigma$ equals
$$\frac{n\lambda}{\binom{n}{L}}\left(\prod_{i=0}^\iy\binom{nr_i}{\rho_i(\ell)}\right).$$ 
 
Define, for $r\in\bfell_1$,
\begin{equation}\label{eqn:codingF}
F(r)
\doteq  \lambda L!\sum_{j=0}^\iy \bar{\zeta}^\del(j,r)e_j + k\sum_{j=0}^\iy[r_{j+1}-r_j]e_j+r_0e_0
\end{equation}
where
\begin{equation*}
 \bar{\zeta}^\del(j,r) \doteq \bar{\zeta}(j-1,r)-\bar{\zeta}(j,r)
\end{equation*}
and, adopting the convention that $\sum_{i=b}^ax_i=0$ for $a<b$,
\begin{equation}\label{eqn:zetabardef}
\bar{\zeta}(j,r)
\doteq \sum_{i_1=0}^{k-1}\frac{\left(\sum_{m=0}^{j-1}r_m\right)^{i_1}}{i_1!}\sum_{i_2=1}^{L-i_1}[i_2\wedge(k-i_1)]\frac{(r_j)^{i_2}}{i_2!}\frac{\left(\sum_{m=j+1}^{\iy}r_{m}\right)^{L-i_1-i_2}}{(L-i_1-i_2)!}.
\end{equation}
It was shown in Theorem 2.2 of \cite{budhiraja2017diffusion} that $\pi^n\to\pi$, in probability, in $\DD([0,T]:\cls)$ where $\pi$ is the unique solution to the following ODE,
\begin{equation}\label{eqn:ODE}
\dot{\pi}(t) = F(\pi(t)),\qquad \pi(0) = \pi_0.
\end{equation} 

Following \cite{li2016mean}, let $f\equiv f^{(L,k)}:[0,1]\to\RR$ be defined as
\begin{equation*}
f(x)
\doteq \sum_{i=1}^k\binom{L}{L-k+i}\binom{L-k+i-2}{i-1}(-1)^{i-1}x^{L-k+i}.
\end{equation*}
The following lemma gives a representation for $\bar{\zeta}$ in terms of $f$.
\begin{lemma}\label{lem:equiv}
Fix $r\in\cls$ and let $u=\iota(r)$, i.e. $u_m = \sum_{i=m}^\iy r_i,\ m\in\NN_0$. Then, for $j\in\NN_0$
\begin{equation}\label{eqn:zetatof}
L!\bar{\zeta}(j,r) = f(u_j)-f(u_{j+1}).
\end{equation}
\end{lemma}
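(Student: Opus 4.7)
The plan is to prove the identity by giving both sides the same probabilistic meaning and thereby reducing it to a one--variable polynomial identity that can be dispatched by elementary binomial bookkeeping. First, setting $a := 1-u_j = \sum_{m<j} r_m$, $b := r_j = u_j - u_{j+1}$, and $c := u_{j+1}$ (so $a+b+c=1$), I would rewrite
\[
L!\,\bar{\zeta}(j,r) \;=\; \sum_{i_1=0}^{k-1}\sum_{i_2=0}^{L-i_1}\bigl(i_2\wedge(k-i_1)\bigr)\binom{L}{i_1,\,i_2,\,L-i_1-i_2}\, a^{i_1} b^{i_2} c^{L-i_1-i_2}
\]
(adding the $i_2=0$ term is harmless since its prefactor vanishes), and recognize this as $\EE[M_j]$, where for $L$ i.i.d.\ samples $X_1,\ldots,X_L \sim r$, $M_j$ denotes the number of $X_\ell$ equal to $j$ that fall among the $k$ shortest (ties broken arbitrarily). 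Indeed, by the multinomial formula the joint distribution of $(N_j^<, N_j^=, N_j^>) := (|\{X_\ell < j\}|, |\{X_\ell = j\}|, |\{X_\ell > j\}|)$ is $\mathrm{Mult}(L; a, b, c)$, and on the event $\{N_j^< = i_1 < k,\ N_j^= = i_2\}$ the count of length-$j$ samples in the top $k$ is precisely $i_2 \wedge (k-i_1)$; on $\{N_j^< \ge k\}$ it is $0$.

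Second, I would introduce $K_a$, the number of samples with $X_\ell \ge a$ that fall among the $k$ shortest, and observe that $M_j = K_j - K_{j+1}$. Since $K_a = k - \min(L - N_a, k) = (N_a - (L-k))_+$ with $N_a := |\{\ell : X_\ell \ge a\}| \sim \mathrm{Bin}(L, u_a)$, this yields $L!\,\bar{\zeta}(j,r) = \varphi(u_j) - \varphi(u_{j+1})$ where
\[
\varphi(u) \;:=\; \EE\bigl[(\mathrm{Bin}(L,u)-(L-k))_+\bigr] \;=\; \sum_{m=1}^{k} m\binom{L}{L-k+m} u^{L-k+m}(1-u)^{k-m}.
\]
It therefore remains to verify $\varphi(u) = f(u)$ as polynomials in $u$.

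Third, I would expand $(1-u)^{k-m}$ by the binomial theorem and read off the coefficient of $u^{L-k+i}$ in $\varphi(u)$. The factorial rewrite $\binom{L}{L-k+m}\binom{k-m}{i-m} = \binom{L}{L-k+i}\binom{L-k+i}{i-m}$ pulls $\binom{L}{L-k+i}$ out of the inner sum, leaving $\binom{L}{L-k+i}\sum_{m=1}^{i}(-1)^{i-m} m\binom{L-k+i}{i-m}$. A short computation, applying the standard alternating identities
\[
\sum_{l=0}^{j}(-1)^l\binom{n}{l} \;=\; (-1)^j\binom{n-1}{j}, \qquad \sum_{l=0}^{j}(-1)^l\, l\binom{n}{l} \;=\; (-1)^j n\binom{n-2}{j-1},
\]
with $n = L-k+i$ and $j = i-1$, collapses the inner sum to $(-1)^{i-1}\binom{L-k+i-2}{i-1}$, matching the coefficient of $u^{L-k+i}$ in $f(u)$ exactly. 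The main obstacle is this last binomial bookkeeping; the signs and index shifts are the only tricky part, and no deeper idea is required once the telescoping $M_j = K_j - K_{j+1}$ in step two has been identified.
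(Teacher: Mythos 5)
Your proof is correct, and it shares the paper's probabilistic skeleton: both arguments read $L!\bar{\zeta}(j,r)$ as the expected number of value-$j$ samples falling among the $k$ shortest of $L$ i.i.d.\ draws from $r$, and both produce the difference structure $f(u_j)-f(u_{j+1})$ by telescoping over the threshold $j$. The execution, however, differs in two substantive ways. First, where the paper establishes the telescoping through the two summation identities \eqref{eqn:equiv1} and \eqref{eqn:equiv2}, verified by repeatedly rearranging and relabeling nested sums, you work directly at the level of random variables via $M_j=K_j-K_{j+1}$ and the single clean formula $K_a=(N_a-(L-k))_+$ with $N_a\sim\mathrm{Bin}(L,u_a)$; this is tidier and makes the cancellation transparent (your $\varphi(u)=\sum_{m=1}^{k} m\binom{L}{L-k+m}u^{L-k+m}(1-u)^{k-m}$ is exactly the paper's $\sum_{\ell=1}^k\sum_{j=L-\ell+1}^L\binom{L}{j}u^j(1-u)^{L-j}$ after exchanging the order of summation). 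Second, and more importantly, the paper does not itself prove the final polynomial identity: it outsources $\sum_{\ell=1}^k\sum_{j=L-\ell+1}^L\binom{L}{j}u^j(1-u)^{L-j}=f(u)$ to Appendix B of Li et al. You supply that verification explicitly through the coefficient extraction; I have checked the two pivotal steps, namely $\binom{L}{L-k+m}\binom{k-m}{i-m}=\binom{L}{L-k+i}\binom{L-k+i}{i-m}$ and the collapse $i\binom{n-1}{i-1}-n\binom{n-2}{i-2}=\binom{n-2}{i-1}$ with $n=L-k+i$, and both hold (including the edge case $i=1$). Your observation that $M_j$ and $K_a$ are well-defined counts irrespective of tie-breaking is also correct. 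The net effect is that your argument is fully self-contained where the paper's defers a key computation to the cited reference; the only thing the paper's route buys is brevity.
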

Proof of this lemma will be give in Section \ref{sec:equivalenceproof}. It follows from Lemma \ref{lem:equiv} and Theorem 2.2 of \cite{budhiraja2017diffusion} that the law of large number limit of $u^n$ solves the following ODE,
\begin{align}\label{eqn:sODE}
\dot{u}_j(t) = \lambda[ f(u_{j-1}(t))-f(u_{j}(t))]-k[u_j(t)-u_{j+1}(t)],\qquad u(0) = g\in\bar{\clu}.
\end{align}
Consider the queue length distribution $\bar{u}=(\bar{u}_m)_{m\in\NN_0}$ defined recursively through,
\begin{align}\label{eqn:fixedPoint}
\left\{\begin{array}{ll}
\bar{u}_{m+1}= \lambda \frac{f(\bar{u}_m)}{k}&\text{ for }m\in\NN_0\\
\bar{u}_0 = 1 &
\end{array}\right.
\end{align}
We will see in Theorem \ref{thm:stableFluid} that $\bar{u}$ is the unique fixed point of \eqref{eqn:sODE}.
The following result shows that the vector $(\bar{u}_m)_{m\in\NN_0}$ which, roughly speaking, represents the steady state distribution of the queue lengths for large $n$, decays super-exponentially in $m$ with rate determined by $L$ and $k$.
\begin{theorem}\label{thm:decay}
Suppose $\bar{u}$ satisfies \eqref{eqn:fixedPoint}. Then the following upper and lower bounds hold:
\begin{enumerate}
\item[i)] $\bar{u}_m\leq \lambda^{\frac{(L/k)^m-1}{L/k-1}}$ for all $m\in\NN_0$.
\item[ii)]  $\bar{u}_m\geq \lambda^{\frac{(L-k+1)^m-1}{L-k}}$ for all $m\in\NN_0$.
\end{enumerate}
\end{theorem}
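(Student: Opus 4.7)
The plan is to prove both bounds by induction on $m$, reducing them to a single pair of polynomial inequalities on the function $f$. Specifically, I would first establish the sandwich
\begin{equation*}
k\,x^{L-k+1}\;\le\;f(x)\;\le\;k\,x^{L/k}, \qquad x\in[0,1],
\end{equation*}
and then use the recursion $\bar{u}_{m+1}=\lambda f(\bar{u}_m)/k$ to carry the induction. For (i), the base case $\bar{u}_0=1=\lambda^0$ is immediate; if $\bar{u}_m\le\lambda^{a_m}$ with $a_m=((L/k)^m-1)/(L/k-1)$, then the upper polynomial bound together with monotonicity of $x\mapsto x^{L/k}$ on $[0,1]$ gives $\bar{u}_{m+1}=\lambda f(\bar{u}_m)/k\le\lambda\,\bar{u}_m^{L/k}\le\lambda^{1+(L/k)a_m}=\lambda^{a_{m+1}}$, matching the claimed exponent. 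The argument for (ii) is completely parallel with $L/k$ replaced by $L-k+1$ throughout.

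To prove the sandwich, I would pass to the probabilistic representation
\begin{equation*}
f(x)\;=\;\EE\bigl[(N-L+k)_+\bigr]\;=\;\sum_{j=L-k+1}^{L}\PP(N\ge j),\qquad N\sim\mathrm{Bin}(L,x),
\end{equation*}
which follows either by direct expansion of the defining formula for $f$ (using the classical alternating-sum identity $\sum_{p=0}^{t}(-1)^p\binom{n}{p}=(-1)^t\binom{n-1}{t}$) or by Lemma \ref{lem:equiv} together with the sampling interpretation of $\bar{\zeta}$ behind the generator \eqref{eqn:generator}. Differentiating $\PP(N\ge j)=\sum_{i\ge j}\binom{L}{i}x^i(1-x)^{L-i}$ and telescoping with the help of $j\binom{L}{j}=L\binom{L-1}{j-1}$ gives the companion identity
\begin{equation*}
xf'(x)\;=\;\sum_{j=L-k+1}^{L} j\,\PP(N=j)\;=\;\EE\bigl[N\,\mathbf{1}_{\{N\ge L-k+1\}}\bigr].
\end{equation*}

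The heart of the argument is then the elasticity sandwich $L/k\le xf'(x)/f(x)\le L-k+1$ on $(0,1]$, which I would deduce from the elementary factorizations
\begin{equation*}
kN-L(N-L+k)=(L-k)(L-N),\qquad (L-k+1)(N-L+k)-N=(L-k)(N-L+k-1).
\end{equation*}
Both right-hand sides are nonnegative for integers $N$ with $L-k+1\le N\le L$ since $L\ge k$, so taking expectations after restricting to $\{N\ge L-k+1\}$ converts these pointwise identities into the two elasticity bounds. Integrating shows that $x\mapsto x^{-L/k}f(x)$ is nondecreasing and $x\mapsto x^{-(L-k+1)}f(x)$ is nonincreasing on $(0,1]$; since both functions equal $f(1)=k$ at $x=1$, the polynomial sandwich on $f$ follows, closing the proof. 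I expect the principal bookkeeping step to be the probabilistic rewriting of $f$ and $xf'$; once those identities are in hand, the remainder is a short argument with linear inequalities on $\{L-k+1,\ldots,L\}$.
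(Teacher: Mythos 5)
Your proposal is correct. The overall reduction is the same as the paper's: both arguments boil the theorem down to the polynomial sandwich $kx^{L-k+1}\le f(x)\le kx^{L/k}$ on $[0,1]$ and then iterate the recursion \eqref{eqn:fixedPoint} (the exponent bookkeeping $a_{m+1}=1+(L/k)a_m$ is exactly what the paper uses implicitly). Where you diverge is in how the sandwich is established. The paper imports the upper bound $f(x)\le kx^{L/k}$ wholesale from Lemma 5 of \cite{li2016mean}, and for the lower bound it shows $w(x)\doteq xf'(x)-(L-k+1)f(x)\le 0$ by quoting a closed-form expansion of $w$ from \cite{li2016mean} and then checking $w(0)=w'(0)=0$ and $w''(x)=-(k-1)x^{L-k}(1-x)^{k-2}\le 0$. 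Your elasticity argument proves the very same inequality $xf'(x)\le (L-k+1)f(x)$ — and, symmetrically, $xf'(x)\ge (L/k)f(x)$, which integrates to the upper bound the paper only cites — from the representation $f(x)=\EE[(N-L+k)_+]$, $xf'(x)=\EE[N\mathbf{1}_{\{N\ge L-k+1\}}]$ with $N\sim\mathrm{Bin}(L,x)$, via the pointwise factorizations $kN-L(N-L+k)=(L-k)(L-N)$ and $(L-k+1)(N-L+k)-N=(L-k)(N-L+k-1)$, both nonnegative on $\{L-k+1\le N\le L\}$. I checked these identities and the derivative formula $x\,\frac{d}{dx}\PP(N\ge j)=j\PP(N=j)$; they are correct, and the binomial form of $f$ does follow from swapping the double sum $\sum_{\ell=1}^k\sum_{j=L-\ell+1}^L\binom{L}{j}x^j(1-x)^{L-j}$ appearing in the proof of Lemma \ref{lem:equiv}. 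What your route buys is a self-contained, unified proof of both halves of the sandwich that also makes transparent why the two exponents $L/k$ and $L-k+1$ appear (they are the extreme elasticities of $f$) and why the bounds coincide at $k=1$; the cost is the extra step of justifying the probabilistic representation, which you should spell out rather than leave as ``either/or.'' Note also that, as in the paper, the induction tacitly needs $\bar{u}_m\in[0,1]$, i.e.\ $\lambda\le 1$, for the polynomial bounds on $f$ to apply.
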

We note that the bounds are tight in the sense that when $k=1$ the upper and lower bounds agree.
Proof of this theorem is given in Section \ref{sec:decayProof}.
Since $f$ is a polynomial it is easy to see that $f(x)=\clo(x^{L-k+1})$ as $x\to0$. Intuitively, it makes sense that the queue length distribution should have an upper bound of the form $\lambda^{\frac{(L-k+1)^m-1}{L-k}}$.
Indeed, we can establish an upper bound of this form for large $m$, however due to the higher order terms in $f$ the bound will not hold for small $m$. In fact,  the threshold for a large enough $m$ will depend on $L$ and $k$. Furthermore, the coefficient of $x^{L-k+1}$ in $f$ depends on $L$ and $k$ and, using its form, it can be shown that the upper bound (for large $m$) will be of the form $a^{\frac{(L-k+1)^m-1}{L-k}}$ where $a$ depends on $L$ and $k$.
Recall that the routing scheme considered here corresponds to the well-known ``Power-of-$d$'' or super market model when $L=d$ and $k=1$. The above result reduces to results in \cite{graham2000chaoticity} and \cite{vvedenskaya1996queueing} in this case.

Following \cite{vvedenskaya1996queueing}, define
\begin{equation*}
v_j(u)=\sum_{i=j}^\iy u_i,\quad u\in\bar{\clu}.
\end{equation*}
Let
$\clu \doteq \{u\in\bar{\clu}|v_1(u)<\iy\}$ and note that this can be identified with the space of probability measures on $\NN_0$ with finite first moment.
The space $\clu$ is endowed with the topology inherited from $\bar{\clu}$.
We now characterize the long time behavior of the law of large number limit. 
Note that $\bar{u}\in\clu$.
The next theorem shows that $\bar{u}$ is the unique fixed point in $\clu$ for the system defined by \eqref{eqn:sODE} and this fixed point is, in fact, stable.
\begin{theorem}\label{thm:stableFluid}
Suppose $\lambda <1$ and $u$ is a solution to \eqref{eqn:sODE} with $g\in\clu$. Then
\begin{enumerate}
\item[i)] $u(t)\in\clu$ for all $t$.
\item[ii)] For each $j\in\NN_0$, $\lim_{t\to\iy}(u_j(t)-\bar{u}_j)=0$ and thus $\lim_{t\to\iy}\rho(u(t),\bar{u})=0$. In particular, $\bar{u}$ is the unique fixed point of \eqref{eqn:sODE} in $\clu$.
\end{enumerate}
\end{theorem}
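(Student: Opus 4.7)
For part (i), the key is to compute $\dot v_1(u(t))$. Summing \eqref{eqn:sODE} over $j \geq 1$ and telescoping, the $\lambda$-terms collapse to $\lambda[f(u_0(t)) - \lim_{j\to\iy} f(u_j(t))] = \lambda f(1)$, while the $k$-terms collapse to $k u_1(t)$. Since $f(1) = k$ (which follows by summing the identity in Lemma \ref{lem:equiv} over $j \geq 0$), this yields $\dot v_1(u(t)) = k(\lambda - u_1(t)) \leq k\lambda$, whence $v_1(u(t)) \leq v_1(g) + k\lambda t < \iy$ for all $t$. The interchange of derivative and infinite sum is justified by truncation at level $N$ and taking $N \to \iy$, using that each $u_j \in [0,1]$ and that the telescoped partial sums are uniformly controlled.

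For part (ii), uniqueness of the fixed point in $\clu$ is immediate: the same $v_j$ computation applied to a stationary $u^* \in \clu$ gives $\lambda f(u^*_{j-1}) = k u^*_j$ for each $j \geq 1$, which together with $u_0^* = 1$ is exactly \eqref{eqn:fixedPoint}, forcing $u^* = \bar{u}$. For convergence I would use a monotone-coupling argument. The vector field in \eqref{eqn:sODE} is quasi-monotone: $\partial F_j/\partial u_{j-1} = \lambda f'(u_{j-1}) \geq 0$ (non-negativity of $f'$ on $[0,1]$ follows from the probabilistic interpretation of $f$ via Lemma \ref{lem:equiv} as a routing probability) and $\partial F_j/\partial u_{j+1} = k > 0$, so by a standard theorem for quasi-monotone ODEs the flow preserves the componentwise order. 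Let $g^+ = g \vee \bar{u}$ and $g^- = g \wedge \bar{u}$; both lie in $\clu$ since $v_1(\bar{u}) < \iy$ by Theorem \ref{thm:decay}. Writing $u^\pm(\cdot)$ for the corresponding solutions, we obtain the sandwich $u^-(t) \leq u(t) \leq u^+(t)$ together with $u^-(t) \leq \bar{u} \leq u^+(t)$ for all $t$, so it suffices to show $u^\pm_j(t) \to \bar{u}_j$ for each $j$.

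I would prove this by induction on $j$. For $u^+$ at $j = 1$: since $u^+_1 \geq \bar{u}_1 = \lambda$, $\dot v_1(u^+) = k(\lambda - u^+_1) \leq 0$, so $v_1(u^+(\cdot))$ is non-increasing and bounded below by $v_1(\bar{u})$, hence converges. Consequently $u^+_1 - \lambda \geq 0$ is integrable on $[0,\iy)$, and combined with a uniform Lipschitz bound on $u^+_1$ (derived from $|\dot u^+_1| \leq 2k$), this forces $u^+_1(t) \to \lambda$. For the inductive step, assume $u^+_i \to \bar{u}_i$ for $i < j$ and set $y(t) = v_j(u^+(t)) - v_j(\bar{u}) \geq 0$ and $w(t) = u^+_j(t) - \bar{u}_j \geq 0$; then $\dot y(t) = \lambda[f(u^+_{j-1}(t)) - f(\bar{u}_{j-1})] - k w(t) =: A(t) - k w(t)$ with $A(t) \to 0$ by continuity of $f$ and the inductive hypothesis. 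If $w \not\to 0$, there exist $\eps > 0$ and $t_n \to \iy$ with $w(t_n) \geq \eps$; by uniform Lipschitz continuity of $w$, $w \geq \eps/2$ on intervals of fixed positive length around each $t_n$, and for large $t_n$ we have $|A(t)| \leq k\eps/4$ on these intervals, forcing $\dot y \leq -k\eps/4$ there. Choosing the $t_n$ so these intervals are disjoint drives $y(t) \to -\iy$, contradicting $y \geq 0$. Hence $w(t) \to 0$, i.e. $u^+_j \to \bar{u}_j$. The argument for $u^-$ is entirely symmetric ($v_1(u^-)$ is non-decreasing and $u^-_j \leq \bar{u}_j$).

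The main technical obstacle is this inductive step. Unlike $v_1(u^+)$, the tail sums $v_j(u^+)$ for $j \geq 2$ are not monotone in $t$, so the simple monotone-convergence mechanism used at $j=1$ is unavailable. The ``Lipschitz + contradiction'' device above replaces it but requires uniform control of both the remainder $A(t)$ after a finite time and the Lipschitz constant of $w$, both of which follow from the bounded character of $u^\pm$ and the polynomial nature of $f$.
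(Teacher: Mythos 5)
Your overall architecture matches the paper's: part (i) via control of $v_1$, and part (ii) via the monotone comparison principle, the sandwich $u^-(t)\le u(t)\le u^+(t)$ with $g^{\pm}=g\vee\bar u,\ g\wedge\bar u$, and an induction on the coordinate index using the tail sums $v_j$. Your direct derivation of uniqueness from the stationarity equations (telescoping $0=\lambda f(u^*_{m-1})-ku^*_m$, valid since $u^*\in\clu$ forces $u^*_j\to 0$) is a nice small variation; the paper instead obtains uniqueness as a byproduct of global convergence. Your linear bound $v_1(u(t))\le v_1(g)+k\lambda t$ is also a legitimate alternative to the paper's Gronwall-type estimate $v_1(u(t))\le e^{\lambda k t}[1+v_1(g)]$, and the paper proves the comparison principle you invoke as ``standard'' by hand, through truncated systems (its Lemmas 4.1--4.3 and 4.6), which you would need to either cite precisely or reproduce.

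However, your inductive step for $j\ge 2$ has a genuine gap. You carry forward only the pointwise hypothesis $u^+_{j-1}(t)\to\bar u_{j-1}$, hence only $A(t)\to 0$, and then argue by contradiction from $\dot y=A-kw$, $y\ge 0$, $w\ge 0$. This is not enough: between your disjoint intervals you can only say $\dot y\le A$, and since $A$ is merely $o(1)$ (possibly non-integrable, and in the $u^+$ case it is nonnegative), $y$ can recover on the long gaps all that it loses on the intervals. Concretely, with $A(t)=1/(1+t)$ one can have $w\ge 0$ with bumps of fixed height $\eps$ at times $t_n=e^n$ while $y(t)\approx y(0)+\log(1+t)-k\!\int_0^t w$ stays nonnegative forever; so $y\not\to-\iy$ and no contradiction arises. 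The repair is exactly what the paper does: strengthen the induction hypothesis from convergence to integrability, $\int_0^\iy|u^+_{j-1}(s)-\bar u_{j-1}|\,ds<\iy$. Then the Lipschitz bound $|f'|\le L$ gives $\int_0^\iy A<\iy$, and integrating $\dot y=A-kw$ together with $y\ge0$ yields $k\int_0^\iy w\le y(0)+\int_0^\iy A<\iy$; this propagates the induction (here the fixed sign of $w$ coming from the sandwich is essential), and convergence at every level then follows from integrability plus the uniform Lipschitz bound on $t\mapsto u^+_j(t)$, exactly as in your base case $j=1$, where you do in fact establish integrability before concluding convergence. In short: keep the quantity you prove at $j=1$ (integrability) as the quantity you induct on, rather than its consequence (convergence).
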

The proof of this theorem will be given in Section \ref{sec:stability}. 

From Proposition 1 of \cite{li2016mean} the process $Q^n$ is positive recurrent and, thus, has a unique invariant distribution $\ti\cll_n\in\clp(\NN_0^n)$. Note that $\ti\cll_n$ can be identified with a measure $\cll_n\in\clp(\bar{\clu}_n)$ which is an invariant measure for $u^n$. Furthermore, for any $t\geq0$, $u^n(t)$ can be mapped to $\ti Q^n(t)\in\NN_0$ which is equal (up to a relabeling) to $Q^n(t)$. Due to symmetry, $Q^n$ and $\ti Q^n$ must have the same invariant distribution. Therefore $\cll_n$ is the unique invariant measure for $u^n$. The following result shows that this invariant measure converges, as $n\to\iy$, to the Dirac measure concentrated at $\bar{u}$.
\begin{theorem}\label{thm:interchange}
Let $\cll_n$ be the unique invariant distribution for the process $u^n$. Then $\cll_n\Rightarrow \del_{\bar{u}}$. Furthermore, we have
\begin{align*}
\lim_{n\to\iy}\lim_{t\to\iy}\E u^n(t) = \bar{u}.
\end{align*}
\end{theorem}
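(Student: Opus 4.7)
The plan follows the classical interchange-of-limits recipe. Because $\bar{\clu}$ is a closed subset of the compact product space $[0,1]^{\NN_0}$, the metric space $(\bar{\clu},\rho)$ is compact, and thus $\{\cll_n\}$ is automatically tight in $\clp(\bar{\clu})$. Fix any subsequential weak limit $\cll_{n_j}\Rightarrow\cll_\infty$; the goal is to identify $\cll_\infty=\del_{\bar u}$, which upgrades the convergence to the full sequence.

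The main technical obstacle, which I would address first, is a uniform first-moment bound
\begin{equation*}
C\doteq \sup_n \E_{\cll_n}[v_1(u)]=\sup_n \E_{\ti\cll_n}[Q_1^n]<\iy,
\end{equation*}
where the identification of the two suprema uses symmetry of the invariant law. This matters because both the LLN of \cite{budhiraja2017diffusion} and Theorem \ref{thm:stableFluid} identify their unique attractor in $\clu$, a genuinely smaller set than $\bar{\clu}$, and without a tail estimate $\cll_\infty$ could place mass on $\bar{\clu}\setminus\clu$ where uniqueness fails. I would obtain the bound via a coupling with the simpler ``random-$k$-of-$L$'' policy that sends each arriving file to $k$ uniformly chosen servers among the eligible $L$; in that comparison system each queue has marginal arrival rate $\lambda k$ and service rate $k$, hence uniformly bounded marginal mean, and the batch-sampling policy should majorize it in the appropriate stochastic order, in the spirit of the couplings employed in \cite{vvedenskaya1996queueing,graham2000chaoticity}. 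Given the bound, $v_1$ is lower semicontinuous on $(\bar{\clu},\rho)$ by Fatou, so Portmanteau combined with Markov's inequality yields $\cll_\infty(\{u:v_1(u)\le M\})\ge 1-C/M$ for every $M>0$, giving $\cll_\infty(\clu)=1$.

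With this in hand I would conclude as follows. Let $\Phi_t$ denote the solution map of \eqref{eqn:sODE}. By Skorokhod representation, arrange on one probability space that $u^{n_j}(0)\to u^\infty(0)$ almost surely with $u^\infty(0)\sim\cll_\infty$, so $u^\infty(0)\in\clu$ a.s. Conditioning on $u^{n_j}(0)$ and invoking Theorem 2.2 of \cite{budhiraja2017diffusion} (translated to the $u$-process via Lemma \ref{lem:equiv}) gives $u^{n_j}(t)\to \Phi_t(u^\infty(0))$ in probability in $(\bar{\clu},\rho)$ for each fixed $t\ge 0$. Stationarity of $\cll_{n_j}$ forces $u^{n_j}(t)\sim\cll_{n_j}$, so passing to the limit identifies the law of $\Phi_t(u^\infty(0))$ as $\cll_\infty$; that is, $\cll_\infty$ is invariant under the ODE semigroup on $\clu$. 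Theorem \ref{thm:stableFluid}(ii) then gives $\Phi_t(u^\infty(0))\to \bar u$ a.s.\ in $\rho$, hence in distribution, and combined with invariance this forces $\cll_\infty=\del_{\bar u}$. Since the limit is the same along every convergent subsequence, $\cll_n\Rightarrow \del_{\bar u}$. Finally, each coordinate map $u\mapsto u_j$ is bounded continuous on $\bar{\clu}$, so bounded convergence gives $\lim_n\lim_t \E u^n_j(t)=\lim_n \E_{\cll_n}u_j=\bar u_j$ for every $j\in\NN_0$, which is the coordinate-wise (and hence $\rho$-wise) convergence of the means claimed in the theorem.
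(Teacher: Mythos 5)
Your proposal follows the same architecture as the paper's proof---tightness of $\{\cll_n\}$ from compactness of $(\bar{\clu},\rho)$, extraction of a subsequential limit $\cll_\iy$, propagation of the functional LLN from stationary initial data to conclude that $\cll_\iy$ is invariant for the ODE flow, and identification of $\cll_\iy=\del_{\bar u}$ via Theorem \ref{thm:stableFluid}---but you insert one step the paper does not: a uniform bound $\sup_n\E_{\cll_n}[v_1(u)]<\iy$, used to show $\cll_\iy(\clu)=1$. This step is not cosmetic. Theorem \ref{thm:stableFluid} gives uniqueness and attractivity of $\bar u$ only on $\clu$, and the constant sequence $(1,1,\ldots)\in\bar{\clu}\setminus\clu$ is a second fixed point of \eqref{eqn:sODE}, so an invariant law for the flow on all of $\bar{\clu}$ need not equal $\del_{\bar u}$; some tail control on the $\cll_n$ really is required, whereas the paper passes directly to convergence in $\DD([0,T],\clu)$ without verifying it for the stationary initial conditions. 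The weak point of your version is that this added step is precisely the one you leave as a sketch: the coupling with the random-$k$-of-$L$ policy is plausible but unproven, and as stated the domination points the wrong way---you need the batch-sampling system to be stochastically dominated by (not to majorize) the comparison system, at least at the level of the total job count $\sum_i Q^n_i(t)$, which is exactly $n\,v_1(u^n(t))$. A cleaner route to the same bound is to extract it from the Foster--Lyapunov estimate underlying Proposition 1 of \cite{li2016mean}, which gives $\sup_n\frac{1}{n}\E_{\ti\cll_n}\bigl[\sum_i Q^n_i\bigr]<\iy$ directly. The remainder of your argument (Skorokhod representation, lower semicontinuity of $v_1$ plus Portmanteau and Markov, invariance combined with attractivity forcing $\cll_\iy=\del_{\bar u}$, and bounded convergence of the coordinate means for the second assertion) is correct and coincides with the paper's.
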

Proof of Theorem \ref{thm:interchange} is given in Section \ref{sec:interchange}.

\section{Proof of Lemma \ref{lem:equiv}}\label{sec:equivalenceproof}
The result will follow upon verifying,
\begin{align}\label{eqn:equiv1}
&L!\bar{\zeta}(m,r) = \sum_{\ell=1}^k\sum_{i_1=0}^{\ell-1}\binom{L}{i_1}\left(1-u_m\right)^{i_1}\sum_{i_2=\ell-i_1}^{L-i_1}\binom{L-i_1}{i_2}r_m^{i_2}u_{m+1}^{L-i_1-i_2}
\end{align}
and, for $\ell = \{1,\ldots,k\}$,
\begin{align}\label{eqn:equiv2}
&\sum_{j=m}^\iy\sum_{i_1=0}^{\ell-1}\binom{L}{i_1}\left(1-u_j\right)^{i_1}\sum_{i_2=\ell-i_1}^{L-i_1}\binom{L-i_1}{i_2}(r_j)^{i_2}u_{j+1}^{L-i_1-i_2}
= \sum_{j=L-\ell+1}^L\binom{L}{j}u^j_m(1-u_m)^{L-j}.
\end{align}
These equations can be interpreted as follows. Suppose the occupancy measure is in state $r$. Roughly speaking, a typical term in the outside summation on the RHS of \eqref{eqn:equiv1}, denoted as $p_m(\ell)$, corresponds to the probability that the $\ell$-th largest out of $L$ randomly selected queues is of length $m$. Then \eqref{eqn:equiv1} states that the rate of jobs being routed into queues of length $m$ is equal to the sum $\sum_{\ell=1}^kp_m(\ell)$. 
Recall that a file request will correspond to a queue length configuration $a_1\leq a_2\leq\cdots\leq a_L$, where $a_j$ corresponds to the length of the $j$-th largest queue. 
A typical term in the outside summation on the LHS of \eqref{eqn:equiv2}, denoted $\ti p_\ell(j)$, corresponds to the probability that $a_\ell = j$ . 
Terms in the summation on the RHS of \eqref{eqn:equiv2}, denoted $q_m(j)$, correspond to the probability that $a_{j-1}<m\leq a_{j}$. 
The expression \eqref{eqn:equiv2} then states that $\sum_{j=m}^\iy\ti p_\ell(j) = \sum_{j=L-\ell+1}^L  q_m(j)$.
Once these equalities are established the remainder of the argument follows as in Appendix B of \cite{li2016mean} which argues that $\sum_{\ell=1}^k\sum_{j=L-\ell+1}^L q_m(j)=f(u_m)$. Combining this fact with \eqref{eqn:equiv1} and \eqref{eqn:equiv2} then gives,
\begin{align*}
L!\bar{\zeta}(m,r) &= \sum_{\ell=1}^kp_m(\ell)
= \sum_{\ell=1}^k\left[\sum_{j=m}^\iy\ti p_\ell(j)-\sum_{j=m+1}^\iy\ti p_\ell(j)\right]
= \sum_{\ell=1}^k\left[\sum_{j=L-\ell+1}^L q_m(j)-\sum_{j=L-\ell+1}^L q_{m+1}(j)\right]\\
&= f(u_m)-f(u_{m+1})
\end{align*}
which proves the result.

We now prove the two equalities. First consider \eqref{eqn:equiv1}. By rearranging and collecting combinatorial terms we can write
\begin{align}\label{eqn:comb1}
L!\bar{\zeta}(m,r)=\sum_{i_1=0}^{k-1}\sum_{i_2=1}^{L-i_1}[i_2\wedge(k-i_1)]\binom{L}{i_1}\binom{L-i_1}{i_2}\left(1-u_m\right)^{i_1}r_m^{i_2}u_{m+1}^{L-i_1-i_2}.
\end{align}
Note that the RHS in \eqref{eqn:comb1} can be written as
\begin{align}\label{eqn:comb2}
\begin{split}
&\sum_{i_1=0}^{k-1}\sum_{i_2=1}^{L-i_1}\sum_{\ell=1}^{[i_2\wedge(k-i_1)]}\binom{L}{i_1}\binom{L-i_1}{i_2}\left(1-u_m\right)^{i_1}r_m^{i_2}u_{m+1}^{L-i_1-i_2}\\
&\qquad=
\sum_{i_1=0}^{k-1}\binom{L}{i_1}\left(1-u_m\right)^{i_1}\sum_{i_2=1}^{L-i_1}\sum_{\ell=i_1+1}^{(i_1+i_2)\wedge k}\binom{L-i_1}{i_2}r_m^{i_2}u_{m+1}^{L-i_1-i_2}.
\end{split}
\end{align}
We then exchange the order of summations as follows
\begin{align}\label{eqn:comb3}
\begin{split}
&\sum_{i_1=0}^{k-1}\binom{L}{i_1}\left(1-u_m\right)^{i_1}\sum_{i_2=1}^{L-i_1}\sum_{\ell=i_1+1}^{(i_1+i_2)\wedge k}\binom{L-i_1}{i_2}r_m^{i_2}u_{m+1}^{L-i_1-i_2}\\
&\qquad=\sum_{i_1=0}^{k-1}\sum_{\ell=i_1+1}^k\binom{L}{i_1}\left(1-u_m\right)^{i_1}\sum_{i_2=\ell-i_1}^{L-i_1}\binom{L-i_1}{i_2}r_m^{i_2}u_{m+1}^{L-i_1-i_2}\\
&\qquad=\sum_{\ell=1}^k\sum_{i_1=0}^{\ell-1}\binom{L}{i_1}\left(1-u_m\right)^{i_1}\sum_{i_2=\ell-i_1}^{L-i_1}\binom{L-i_1}{i_2}r_m^{i_2}u_{m+1}^{L-i_1-i_2}.
\end{split}
\end{align}
Combining \eqref{eqn:comb1}, \eqref{eqn:comb2}, and \eqref{eqn:comb3} gives \eqref{eqn:equiv1}.

We now prove \eqref{eqn:equiv2}. Fix $\ell\in\{1,\ldots,k\}$ and note that
\begin{align}\label{eqn:split1}
\sum_{j=L-\ell+1}^L\binom{L}{j}u^j_m(1-u_m)^{L-j}
= \sum_{i_1=0}^{\ell-1}\binom{L}{i_1}(1-u_m)^{i_1}u^{L-i_1}_m.
\end{align}
Then, applying the binomial theorem to $u^{L-i_1}_m=\left(r_m+u_{m+1}\right)^{L-i_1}$, \eqref{eqn:split1} becomes
\begin{align}\label{eqn:split2}
\sum_{i_1=0}^{\ell-1}\binom{L}{i_1}(1-u_m)^{i_1}u^{L-i_1}_m
= \sum_{i_1=0}^{\ell-1}\binom{L}{i_1}(1-u_m)^{i_1}\sum_{i_2=0}^{L-i_1}\binom{L-i_1}{i_2}r_m^{i_2}u^{L-i_1-i_2}_{m+1}
\end{align}
which, by breaking up the summation indexed by $i_2$, can be rewritten as
\begin{align}\label{eqn:split3}
\begin{split}
&\sum_{i_1=0}^{\ell-1}\binom{L}{i_1}(1-u_m)^{i_1}\sum_{i_2=\ell-i_1}^{L-i_1}\binom{L-i_1}{i_2}r_m^{i_2}u^{L-i_1-i_2}_{m+1}\\
&\qquad+ \sum_{i_1=0}^{\ell-1}\binom{L}{i_1}(1-u_m)^{i_1}\sum_{i_2=0}^{\ell-i_1-1}\binom{L-i_1}{i_2}r_m^{i_2}u^{L-i_1-i_2}_{m+1}.
\end{split}
\end{align}
Now consider the second term in \eqref{eqn:split3}. By relabeling the indices we get
\begin{align}\label{eqn:split4}
\begin{split}
&\sum_{i_1=0}^{\ell-1}\binom{L}{i_1}(1-u_m)^{i_1}\sum_{i_2=0}^{\ell-i_1-1}\binom{L-i_1}{i_2}r_m^{i_2}u^{L-i_1-i_2}_{m+1}\\
&\qquad= \sum_{i_1=L-\ell+1}^{L}\binom{L}{i_1}u_{m+1}^{i_1}\sum_{i_2=0}^{L-i_1}\binom{L-i_1}{i_2}(1-u_m)^{i_2}r_m^{L-i_1-i_2}\\
&\qquad= \sum_{i_1=L-\ell+1}^{L}\binom{L}{i_1}u_{m+1}^{i_1}(1-u_{m+1})^{L-i_1}
\end{split}
\end{align}
where the second equality follows from the binomial theorem. It then follows from \eqref{eqn:split1}-\eqref{eqn:split4} that for any $m'>m$
\begin{align*}
&\sum_{j=L-\ell+1}^L\binom{L}{j}u^j_m(1-u_m)^{L-j}\\
&\qquad=\sum_{j=m}^{m'}\sum_{i_1=0}^{\ell-1}\binom{L}{i_1}(1-u_j)^{i_1}\sum_{i_2=\ell-i_1}^{L-i_1}\binom{L-i_1}{i_2}r_j^{i_2}u^{L-i_1-i_2}_{j+1}+ \sum_{j=L-\ell+1}^{L}\binom{L}{j}u_{m'}^{j}(1-u_{m'})^{L-j}.
\end{align*}
The result then follows upon sending $m'\to\iy$.
\qed

\section{Proof of Theorem \ref{thm:decay}}\label{sec:decayProof}
It is proved in Lemma 5 of \cite{li2016mean} that $f(x)\leq kx^{L/k}$ and thus $(i)$ is immediate from \eqref{eqn:fixedPoint}.

We now verify $(ii)$. From \eqref{eqn:fixedPoint} it suffices to show that $f(x)\geq kx^{L-k+1}$ for $x\in[0,1]$. Since both sides of the inequality evaluate to zero at $x=0$, it is equivalent to show 
\begin{align}\label{eqn:hcond}
h(x) \doteq \frac{1}{k}\frac{f(x)}{x^{L-k+1}}\geq1\text{ for }x\in(0,1].
\end{align}
Note that $f(1)=k$ (cf. Lemma 2 of \cite{li2016mean}), and thus $h(1) = 1$. It follows that $h'(x)\leq 0,\ x\in(0,1]$ is sufficient for verifying \eqref{eqn:hcond}. 
Taking the derivative of $h$ gives
\begin{align}\label{eqn:hprime}
h'(x) = \frac{1}{k}\frac{x f'(x)-(L-k+1)f(x)}{x^{L-k+2}}.
\end{align}
Denoting $x f'(x)-(L-k+1)f(x)$ as $w(x)$, we note that in order to show $h'(x)\leq 0$ one must only verify $w(x)\leq 0$. One can verify (cf. (68) and (69) of \cite{li2016mean}) that $w$ can be expressed as follows 
\begin{align*}
w(x) \doteq \sum_{\ell=0}^{k-1}\binom{k-1}{\ell}(-1)^\ell\frac{1}{L-k+\ell}\frac{\ell}{L-k+\ell+1}x^{L-k+\ell+1}.
\end{align*}
Therefore
\begin{align*}
w''(x) = x^{L-k-1}\sum_{\ell=0}^{k-1}\binom{k-1}{\ell}\ell (-x)^{\ell}
= -(k-1)x^{L-k}(1-x)^{k-2}
\end{align*}
and so $w''(x)\leq 0$ for all $x\in[0,1]$. Noting that $w'(0)=0$, it follows that $w'(x)\leq 0$ for all $x\in(0,1]$ and since $w(0)=0$, $w(x)\leq 0$ for $x\in(0,1]$. This verifies \eqref{eqn:hcond}.
\qed

\section{Proof of Theorem \ref{thm:stableFluid}}\label{sec:stability}
In this section we present the proof of Theorem \ref{thm:stableFluid}.
Namely, for every $g\in\clu$, the solution $u$ of \eqref{eqn:sODE} satisfies $u(t)\in\clu$ for all $t\geq0$ (Lemma \ref{lem:momentBound}), and there is a unique fixed point to \eqref{eqn:sODE} in $\clu$ defined by \eqref{eqn:fixedPoint} which is asymptotically stable.
The argument follows along the lines of the proof of Theorem 1 of \cite{vvedenskaya1996queueing} (cf. Lemmas 1-7 therein).
The key difference is that the the term $\lambda[f(u_{i-1})-f(u_{i})]$ appears in the differential equation instead of $\lambda[u_{i-1}^2-u_i^2]$.
As we will see, this difference can be handled using the properties of $f$ shown in Lemma 2 of \cite{li2016mean}. 
Specifically, we will use the facts that $f(0)=0$, $f(1) = k$, $f$ is strictly increasing, convex, and differentiable with derivative bounded by $L$.

We first consider a truncated version of \eqref{eqn:sODE}. Fix $K\in\NN$, $c\geq 0$, and consider the following boundary value problem
\begin{equation}\label{eqn:truncSys}
\left\{\begin{array}{ll}
\dot{s}_j(t)
&= \lambda [f(s_{j-1}(t))-f(s_{j}(t))]-k[s_j(t)-s_{j+1}(t)],\quad j = 1,\ldots,K\\
s_0(t) &= 1\\
s_j(0) &= g_j,\quad j= 1,\ldots, K
\end{array}\right.
\end{equation}
with
\begin{equation}\label{eqn:truncInit}
s_{K+1}(t) = c.
\end{equation}
The following two lemmas giving monotonicity and uniqueness properties for the truncated system will be used to extend the same properties to the full system in Lemma \ref{lem:monotone}.
\begin{lemma}\label{lem:truncInc}
Suppose $s$ is a solution to \eqref{eqn:truncSys}-\eqref{eqn:truncInit} with initial conditions satisfying
\begin{align}\label{eqn:decreasingInit}
1=g_0\geq g_1\geq\ldots\geq g_K\geq g_{K+1}=c.
\end{align}
Then
\begin{align}\label{eqn:decreasingEqn}
1=s_0(t)\geq s_1(t)\geq\ldots\geq s_K(t)\geq s_{K+1}(t)=c
\end{align}
for all $t\geq0$.
\end{lemma}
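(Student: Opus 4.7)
The plan is to introduce the differences $d_j(t) \doteq s_{j-1}(t) - s_j(t)$ for $j = 1, \ldots, K+1$ and show that the nonnegativity of $d_j(t)$ guaranteed at $t = 0$ by \eqref{eqn:decreasingInit} is preserved for all $t \geq 0$. Together with the fixed boundary values $s_0(t) \equiv 1$ and $s_{K+1}(t) \equiv c$, this immediately gives \eqref{eqn:decreasingEqn}. Differentiating $d_j$ via \eqref{eqn:truncSys} and applying the mean value theorem to each increment $f(s_{j-1}(t)) - f(s_j(t)) = f'(\xi_j(t)) d_j(t)$, with $\xi_j(t)$ between $s_j(t)$ and $s_{j-1}(t)$, one obtains a tridiagonal linear (non-autonomous) system
\begin{align*}
\dot{d}_j(t) = a_j(t)\, d_{j-1}(t) - c_j(t)\, d_j(t) + b_j(t)\, d_{j+1}(t), \qquad j = 1,\ldots, K+1,
\end{align*}
under the conventions $d_0 \equiv d_{K+2} \equiv 0$, where the off-diagonal coefficients are nonnegative (explicitly $a_1 \equiv 0$ and $a_j = \lambda f'(\xi_{j-1})$ for $j \geq 2$; $b_j = k$ for $j \leq K$ and $b_{K+1} \equiv 0$) and $c_j(t) \geq 0$. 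Lemma 2 of \cite{li2016mean} gives $f' \leq L$ on $[0,1]$, so all of these coefficients are uniformly bounded by $C \doteq \lambda L + k$.

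I will then establish invariance of the nonnegative cone for this cooperative tridiagonal system via a standard perturbation argument. Fix $\alpha > C$ and $\eps > 0$, and set $e_j(t) \doteq d_j(t) + \eps e^{\alpha t}$, so that $e_j(0) > 0$ for every $j$. A direct substitution yields
\begin{align*}
\dot{e}_j(t) = a_j(t)\, e_{j-1}(t) + b_j(t)\, e_{j+1}(t) - c_j(t)\, e_j(t) + \eps e^{\alpha t}\bigl[\alpha + c_j(t) - a_j(t) - b_j(t)\bigr],
\end{align*}
and the bracketed term is strictly positive by the choice of $\alpha$. Suppose for contradiction that $t^* \doteq \inf\{t > 0 : \min_j e_j(t) \leq 0\}$ is finite. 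Then by continuity some coordinate vanishes, $e_{j^*}(t^*) = 0$, while $e_j(t^*) \geq 0$ for every $j$, whence
\begin{align*}
\dot{e}_{j^*}(t^*) \geq \eps e^{\alpha t^*}\bigl[\alpha + c_{j^*}(t^*) - a_{j^*}(t^*) - b_{j^*}(t^*)\bigr] > 0,
\end{align*}
contradicting the fact that $e_{j^*}$ transitions from strictly positive values on $(0, t^*)$ to zero at $t^*$ (which would force $\dot{e}_{j^*}(t^*) \leq 0$). Hence $e_j(t) > 0$ on all of $[0,\iy)$, and letting $\eps \to 0$ gives $d_j(t) \geq 0$ as desired.

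The main point that requires care is the endpoint book-keeping. The ODEs for $d_1$ and $d_{K+1}$ take a slightly different form from the interior ones because $s_0$ and $s_{K+1}$ are frozen, and one must verify by hand that they still fit into the cooperative tridiagonal template with nonnegative off-diagonal coefficients; this works out cleanly because the ``missing'' $d_0$ and $d_{K+2}$ contributions correspond to $a_1 \equiv 0$ and $b_{K+1} \equiv 0$, while monotonicity and boundedness of $f'$ control the remaining terms uniformly. Global existence of $s$ is part of the hypothesis, so the manipulations above are justified without any separate well-posedness argument.
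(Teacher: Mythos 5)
Your proof is correct in its main thrust but takes a genuinely different route from the paper. The paper perturbs the \emph{initial data} so that the inequalities in \eqref{eqn:decreasingInit} become strict (invoking continuous dependence on initial conditions), then runs a first-crossing argument directly on the nonlinear system: at the first time $t_0$ an equality appears it locates an index $i$ with $s_{i-1}(t_0)>s_i(t_0)=s_{i+1}(t_0)$ (or the mirror configuration) and derives a sign contradiction from \eqref{eqn:truncSys} using only that $f$ is strictly increasing. You instead linearize the increments by the mean value theorem, recognize the differences $d_j$ as solving a cooperative tridiagonal system, and prove invariance of the nonnegative cone via the $\eps e^{\alpha t}$ trick. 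Your version handles non-strict initial inequalities directly and avoids the (standard but unproved) continuity-in-initial-data step; the paper's version needs only strict monotonicity of $f$, with no derivative bound.

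One step in your argument needs more care. The properties of $f'$ you invoke --- nonnegativity (for cooperativity, i.e.\ $a_j\ge 0$) and the bound $f'\le L$ (to choose $\alpha$) --- are guaranteed only on $[0,1]$ (Lemma 2 of \cite{li2016mean}); in fact $f'$ is negative just to the left of $0$ in general (e.g.\ $f(x)=x^2$ when $L=2$, $k=1$). The mean value points $\xi_j(t)$ lie in $[0,1]$ only if the $s_j(t)$ do, and that containment is itself essentially the conclusion being proved. The paper's first-crossing structure supplies it for free: up to and including $t_0$ the ordering holds, so all $s_j\in[c,1]$. In your argument, at the first zero $t^*$ of the \emph{perturbed} quantities the unperturbed $d_j$ may already be negative of order $\eps e^{\alpha t^*}$, so the $s_j$ may have drifted outside $[0,1]$ by that amount, and the claims ``$a_j\ge 0$'' and ``all coefficients are bounded by $\lambda L+k$'' are, as written, circular. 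The gap is patchable: fix a horizon $[0,T]$, observe that $s_j(t)\ge -(K+1)\eps e^{\alpha T}$ and $s_j(t)\le 1+(K+1)\eps e^{\alpha T}$ up to $t^*$, and use continuity of $f'$ together with $f'\ge 0$ on $[0,1]$ to show the offending contributions at $t^*$ are $O(\eps e^{\alpha t^*})$ and hence absorbed into the strictly positive bracket once $\alpha$ is chosen large enough; but this needs to be said.
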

\begin{proof}
Since solutions to \eqref{eqn:truncSys}-\eqref{eqn:truncInit} depend continuously on the initial conditions we can take the inequalities in \eqref{eqn:decreasingInit} to be strict, without loss of generality. Let $t_0$ be the first time that an equality appears in \eqref{eqn:decreasingEqn}. Since $s_0>s_{K+1}$, then there exists an $i\in\{1,\ldots,K\}$ such that either $s_{i-1}(t_0)>s_i(t_0)=s_{i+1}(t_0)$ or $s_{i-1}(t_0)=s_i(t_0)>s_{i+1}(t_0)$. 
In the former case, since $f$ is strictly increasing, $\dot{s}_i(t_0) = \lambda[f(s_{i-1}(t_0))-f(s_{i}(t_0))]>0$ and $\dot{s}_{i+1}(t_0) = k[s_{i+2}(t_0)-s_{i+1}(t_0)]\leq 0$ if $i<K$ and $s_{i+1}(t_0)=0$ if $i=K$, both of which contradict the assumption that $s_i(t)>s_{i+1}(t)$ for $t<t_0$. The latter case follows from a similar argument.
\end{proof}

\begin{lemma}\label{lem:truncMon}
Let $\{s_i^{(1)}\}_{i=0}^K$ and $\{s_i^{(2)}\}_{i=0}^K$ solve \eqref{eqn:truncSys} and be such that $s^{(1)}_i(0)\geq s^{(2)}_i(0)$ for all $i=1,2,\ldots,K$. If, in addition, $s^{(1)}_{K+1}(t)\geq s^{(2)}_{K+1}(t)$ for all $t\geq0$ then $s^{(1)}_i(t)\geq s^{(2)}_k(t)$ for all $i=1,2, \ldots,K,K+1$ and all $t\geq0$.
\end{lemma}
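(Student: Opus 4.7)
I will adapt the first-time-of-equality argument used in Lemma \ref{lem:truncInc}. Set $d_i(t) \doteq s^{(1)}_i(t) - s^{(2)}_i(t)$, $i = 0,\ldots,K+1$. Then $d_0 \equiv 0$, $d_i(0)\geq 0$ for $i=1,\ldots,K$, $d_{K+1}(t)\geq 0$ for all $t\geq 0$, and subtracting the two instances of \eqref{eqn:truncSys} gives for $i=1,\ldots,K$
\begin{equation*}
\dot d_i(t) = \lambda\bigl[f(s^{(1)}_{i-1}(t))-f(s^{(2)}_{i-1}(t))\bigr] - \lambda\bigl[f(s^{(1)}_i(t))-f(s^{(2)}_i(t))\bigr] - k d_i(t) + k d_{i+1}(t).
\end{equation*}
Since $f$ is Lipschitz on $[0,1]$ (cf.\ Lemma 2 of \cite{li2016mean}), a Gronwall estimate applied to the $K$-dimensional system for $d$ yields continuous dependence of the solution on the initial data and on the boundary function $s_{K+1}(\cdot)$. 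Therefore it suffices, by perturbing $s^{(1)}$ upward by $\epsilon>0$ (both in its initial values and in its boundary $s^{(1)}_{K+1}$) and letting $\epsilon\downarrow 0$ at the end, to prove the result under the strict hypotheses $d_i(0)>0$ for $i=1,\ldots,K$ and $d_{K+1}(t)>0$ for every $t\geq 0$.

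Under these strict hypotheses I argue by contradiction. Let $t_0\doteq \inf\{t>0:d_i(t)=0\text{ for some }i\in\{1,\ldots,K\}\}$, which is strictly positive by continuity. Let $J=\{i\in\{1,\ldots,K\}:d_i(t_0)=0\}$ and $i^*=\min J$. At $t_0$, the identity $s^{(1)}_{i^*}(t_0)=s^{(2)}_{i^*}(t_0)$ kills the second $\lambda$-term, giving
\begin{equation*}
\dot d_{i^*}(t_0) = \lambda\bigl[f(s^{(1)}_{i^*-1}(t_0))-f(s^{(2)}_{i^*-1}(t_0))\bigr] + k\, d_{i^*+1}(t_0),
\end{equation*}
while $d_{i^*}(t)>0$ on $[0,t_0)$ forces $\dot d_{i^*}(t_0)\leq 0$. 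If $i^*\geq 2$, then by minimality $d_{i^*-1}(t_0)>0$, and strict monotonicity of $f$ together with $d_{i^*+1}(t_0)\geq 0$ yields $\dot d_{i^*}(t_0)>0$, a contradiction. If $i^*=1$, the $\lambda$-term vanishes because $d_0\equiv 0$, so $\dot d_1(t_0)=k\, d_2(t_0)\leq 0$, forcing $d_2(t_0)=0$. Iterating this identity one index at a time (at each step both $d_{j-1}(t_0)=d_j(t_0)=0$ annihilate the $\lambda$-terms in $\dot d_j(t_0)$) shows $d_j(t_0)=0$ for all $j=1,\ldots,K$; applying it to $j=K$ then gives $\dot d_K(t_0)=k\, d_{K+1}(t_0)\leq 0$, contradicting $d_{K+1}(t_0)>0$. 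Hence $t_0$ cannot be finite, and $d_i(t)\geq 0$ for all $t\geq 0$ in the perturbed problem. Passing $\epsilon\downarrow 0$ yields the claim for the original data.

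\textbf{Expected main obstacle.} The substantive issue is that $f$ is not a simple square as in the supermarket model of \cite{vvedenskaya1996queueing}, so one cannot factor the nonlinear terms explicitly; the contradiction step instead relies on strict monotonicity of $f$ alone (provided by Lemma 2 of \cite{li2016mean}), which is why I invoke $d_{i^*-1}(t_0)>0$ rather than any explicit Lipschitz lower bound. A secondary point is justifying the perturbation argument over an a priori unbounded time horizon; this is unproblematic because any hypothetical violation of the conclusion produces a finite $t_0$, on $[0,t_0]$ the Gronwall bound is uniform, and the contradiction is local in time.
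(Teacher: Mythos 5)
Your proof is correct and follows essentially the same route as the paper: reduce to strict inequalities by continuous dependence, then derive a sign contradiction for $\dot d_i$ at the first time of equality using the strict monotonicity of $f$. The only difference is that the paper picks the \emph{largest} index $j$ with $d_j(t_0)=0$, so that $d_{j+1}(t_0)>0$ immediately forces $\dot d_j(t_0)>0$ in a single step, whereas your choice of the smallest index requires the extra cascade argument in the $i^*=1$ case; both are valid.
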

\begin{proof}
Again, assume without loss of generality that the inequalities are strict. I.e. $s^{(1)}_i(0)> s^{(2)}_i(0)$ for all $i=1,2,\ldots,K$ and $s^{(1)}_{K+1}(t)> s^{(2)}_{K+1}(t)$, for all $t\geq0$. Suppose the first time equality appears is at time $t_0$. If $j\in\{1,\ldots,K\}$ is the largest index such that $s^{(1)}_j(t_0)= s^{(2)}_j(t_0)$ then, since $f$ is strictly increasing, 
\begin{align*}
\dot{s}^{(1)}_j(t_0)-\dot{s}^{(2)}_j(t_0)
& = \lambda [f(s_{j-1}^{(1)}(t_0))-f(s^{(2)}_{j-1}(t_0))]+k[s^{(1)}_{j+1}(t_0)-s^{(2)}_{j+1}(t_0)]\\
&\geq k[s^{(1)}_{j+1}(t_0)-s^{(2)}_{j+1}(t_0)]
> 0
\end{align*}
which contradicts the assumption $s^{(1)}_j(t)>s^{(2)}_j(t)$ for $t<t_0$.
\end{proof}

Note that Lemma \ref{lem:truncMon}, in particular, shows that there is a unique solution to \eqref{eqn:truncSys}-\eqref{eqn:truncInit}.
We now consider the full system \eqref{eqn:sODE}. In the following lemma we show that the full system can be constructed as the limit of the sequence of truncated systems defined through \eqref{eqn:truncSys}.
\begin{lemma}\label{lem:trunctoLim}
Let $g\in\bar{\clu}$. 
\begin{enumerate}
\item[i)] There exists a unique solution to \eqref{eqn:sODE} in $\bar{\clu}$. 
\item[ii)] This solution can be obtained as the limit as $K\to\iy$ of solutions to the truncated systems \eqref{eqn:truncSys}-\eqref{eqn:truncInit} associated with $c=0$.
\end{enumerate}
\end{lemma}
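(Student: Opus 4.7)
The plan is to establish existence in (i) by constructing $u$ as the monotone limit of the truncated solutions described in (ii), and then to establish uniqueness via a Gronwall-type argument in a weighted norm. Throughout, the key properties used are that $f$ is nondecreasing and $L$-Lipschitz on $[0,1]$ (Lemma 2 of \cite{li2016mean}).

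First I prove (ii) and extract existence for (i) simultaneously. Fix $g \in \bar{\clu}$. For each $K \in \NN$, Lemma \ref{lem:truncMon} provides the unique solution $s^{(K)} = \{s^{(K)}_i\}_{i=0}^{K+1}$ to \eqref{eqn:truncSys}--\eqref{eqn:truncInit} with $c = 0$ and initial data $s^{(K)}_i(0) = g_i$, and Lemma \ref{lem:truncInc} gives $0 \leq s^{(K)}_K(t) \leq \cdots \leq s^{(K)}_0(t) = 1$ for all $t \geq 0$. The key technical step is to note that the restriction of $s^{(K+1)}$ to levels $0, \ldots, K+1$ solves the level-$K$ truncated system with (time-dependent) boundary $s^{(K+1)}_{K+1}(\cdot) \geq 0$, while $s^{(K)}$ solves the same system with boundary $0$; Lemma \ref{lem:truncMon} therefore yields $s^{(K)}_i(t) \leq s^{(K+1)}_i(t)$ for $i \leq K$. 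Since the sequence is also bounded above by $1$, the pointwise limit $u_i(t) \doteq \lim_{K\to\iy} s^{(K)}_i(t)$ exists, inherits monotonicity in $i$, and hence lies in $\bar{\clu}$. Passing to the limit in the integrated form of \eqref{eqn:truncSys} (valid for each fixed $i$ once $K \geq i+1$) via dominated convergence then shows that $u$ satisfies \eqref{eqn:sODE}, proving (ii) and the existence part of (i). The main obstacle in this step is the cross-level comparison; the trick is to view the larger-level solution as a solution of the smaller-level system with a nonzero time-dependent boundary value, so that Lemma \ref{lem:truncMon} applies directly.

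For uniqueness, let $u^{(1)}, u^{(2)} \in \bar{\clu}$ be two solutions with common initial condition, set $\delta_j \doteq u^{(1)}_j - u^{(2)}_j$ (so $\delta_0 \equiv 0$), and consider the weighted functional
\begin{equation*}
D(t) \doteq \sum_{j=0}^\iy 2^{-j} \delta_j(t)^2,
\end{equation*}
which is finite and admits termwise differentiation since $|\delta_j| \leq 1$ and $|\dot\delta_j|$ is uniformly bounded by $2\lambda L + 2k$. Using that $f$ is increasing (so the $-\lambda \delta_j [f(u^{(1)}_j) - f(u^{(2)}_j)]$ contribution is nonpositive) and $L$-Lipschitz (so the shift terms coming from $\delta_{j-1}$ and $\delta_{j+1}$ are controlled via $2ab \leq a^2 + b^2$), a direct computation followed by reindexing the shifted sums produces a differential inequality $\dot D(t) \leq C D(t)$ with $C = C(\lambda, L, k) < \iy$. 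The geometric weights $2^{-j}$ are chosen precisely so that both the $j-1$ shift (ratio $2$) and the $j+1$ shift (ratio $1/2$) contribute only finite multiplicative constants. Since $D(0) = 0$, Gronwall's inequality forces $D \equiv 0$, and hence $u^{(1)} = u^{(2)}$.
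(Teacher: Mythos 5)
Your proof of part (ii), and of existence in part (i), is essentially the paper's argument: construct the level-$K$ solutions with $c=0$, observe via Lemma \ref{lem:truncInc} that $s^{(K+1)}_{K+1}(\cdot)\geq 0$ so that the restriction of $s^{(K+1)}$ can be compared to $s^{(K)}$ through Lemma \ref{lem:truncMon}, deduce monotonicity in $K$, and pass to the (bounded, monotone) limit in the integrated equations. Where you genuinely diverge is uniqueness: the paper does not prove it here at all, but instead cites Proposition 2.1 of \cite{budhiraja2017diffusion} (combined with Lemma \ref{lem:equiv} to translate between the $\pi$- and $u$-coordinates), whereas you give a self-contained contraction estimate for $D(t)=\sum_j 2^{-j}\delta_j(t)^2$. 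Your estimate checks out: the term $-2\lambda\delta_j[f(u^{(1)}_j)-f(u^{(2)}_j)]$ is nonpositive by monotonicity of $f$, the cross terms are handled by $2ab\le a^2+b^2$ and the $L$-Lipschitz bound, the index shifts cost factors $1/2$ and $2$ against the geometric weights, and termwise differentiation is justified by the uniform bound $|\dot\delta_j|\le 2(\lambda L+k)$; Gronwall then gives $D\equiv 0$. The trade-off is the usual one: the paper's route is shorter and leans on machinery already established elsewhere, while yours makes the lemma self-contained and makes explicit exactly which properties of $f$ (monotone, Lipschitz with constant $L$) drive uniqueness. Both are valid; your version is arguably preferable if one wants the paper to stand alone.
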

\begin{proof}
Part $(i)$ follows immediately from Lemma \ref{lem:equiv} and Proposition 2.1 of \cite{budhiraja2017diffusion}. Let $s^K(t),\ K=1,2,\ldots$ denote solutions to \eqref{eqn:truncSys} with $s^K_{K+1}(t)=0$. It follows from Lemma \ref{lem:truncInc} that $s^{K+1}_{K+1}(t)\geq s^{K}_{K+1}(t)=0$ and from Lemma \ref{lem:truncMon} that for fixed $t$ and $i\leq K$, $s^{K+1}_i(t)\geq s_i^K(t)$. It follows that $\lim_{K\to\iy}s^K_i(t)=s_i(t)$ exists, $s(t)\in\bar{\clu}$, and $s_i$ satisfies \eqref{eqn:sODE} which proves $(ii)$.
\end{proof}

\begin{lemma}
Let $u$ be a solution to \eqref{eqn:sODE} taking values in $\bar{\clu}$. Then the following estimate holds for all $t$,
\begin{equation}\label{eqn:taylorEst}
u_j(t)\leq \sum_{i=0}^j\frac{u_i(0)(\lambda k t)^{j-i}}{(j-i)!},\qquad j\in\NN_0.
\end{equation}
\end{lemma}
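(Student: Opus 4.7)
The plan is to prove the estimate by induction on $j$, after first reducing the ODE \eqref{eqn:sODE} to a simple linear differential inequality $\dot{u}_j(t) \leq \lambda k \, u_{j-1}(t)$. The key observation is that two terms on the right-hand side of \eqref{eqn:sODE} can be discarded as non-positive: since $u(t) \in \bar{\clu}$ we have $u_{j+1}(t) \leq u_j(t)$, so $k[u_{j+1}(t) - u_j(t)] \leq 0$; and since $f \geq 0$ on $[0,1]$ (as $f$ is strictly increasing with $f(0)=0$), also $-\lambda f(u_j(t)) \leq 0$. Therefore
\begin{equation*}
\dot{u}_j(t) \leq \lambda f(u_{j-1}(t)), \qquad j \geq 1.
\end{equation*}

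Next I would bound $f(u_{j-1}(t))$ linearly. Using the convexity of $f$ on $[0,1]$ together with $f(0) = 0$ and $f(1) = k$ (cf.\ Lemma 2 of \cite{li2016mean}), the chord inequality yields $f(x) \leq kx$ for $x \in [0,1]$. (Alternatively, this is immediate from the estimate $f(x) \leq k x^{L/k}$ of Lemma 5 of \cite{li2016mean} together with $L/k \geq 1$ and $x \in [0,1]$.) Applied at $u_{j-1}(t) \in [0,1]$, this upgrades the differential inequality to
\begin{equation*}
\dot{u}_j(t) \leq \lambda k \, u_{j-1}(t), \qquad j \geq 1.
\end{equation*}

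Let $v_j(t)$ denote the right-hand side of \eqref{eqn:taylorEst}. A term-by-term differentiation shows that $\dot{v}_j(t) = \lambda k \, v_{j-1}(t)$ for $j \geq 1$, with $v_j(0) = u_j(0)$. I would then induct on $j$. The base case $j = 0$ holds since $u_0(t) \equiv 1 = u_0(0) = v_0(t)$. Assuming $u_{j-1}(t) \leq v_{j-1}(t)$ for all $t \geq 0$, the differential inequality gives $\dot{u}_j(t) \leq \lambda k \, u_{j-1}(t) \leq \lambda k \, v_{j-1}(t) = \dot{v}_j(t)$; integrating from $0$ to $t$ and using $u_j(0) = v_j(0)$ yields $u_j(t) \leq v_j(t)$, completing the induction.

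The only real obstacle is identifying which terms on the right-hand side of \eqref{eqn:sODE} can be safely dropped and supplying the linear bound $f(x) \leq kx$; both are routine given the properties of $f$ recorded in \cite{li2016mean}. Once they are in hand the argument is essentially a cascade form of Gronwall's inequality, and the explicit power-series coefficients fall out automatically from the structure of $v_j$.
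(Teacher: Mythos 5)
Your proof is correct and follows essentially the same route as the paper: drop the non-positive terms of \eqref{eqn:sODE} to get $\dot{u}_j \leq \lambda f(u_{j-1})$, use convexity of $f$ with $f(0)=0$, $f(1)=k$ to get $f(x)\leq kx$, and then induct on $j$ by integrating. The paper's own proof is exactly this argument, just stated more tersely.
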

\begin{proof}
The lemma follows from using an inductive argument. Note that the inequality is immediate for $j=0$. 
Suppose now that \eqref{eqn:taylorEst} holds for $j-1$, for some $j\geq 1$.
Then, since $f(0)=0$, $f(1) = k$, and $f$ is convex on $[0,1]$, it follows from \eqref{eqn:sODE} that
\begin{equation*}
\dot{u}_j(t)\leq \lambda f(u_{j-1}(t))\leq \lambda k u_{j-1}(t).
\end{equation*}
Since \eqref{eqn:taylorEst} holds for $j-1$ by our inductive hypothesis we have, by integrating over $t$ on both sides of the above inequality, that \eqref{eqn:taylorEst} also holds for $j$. The result follows. 
\end{proof}

\begin{lemma}\label{lem:momentBound}
Let $u$ be a solution to \eqref{eqn:sODE} taking values in $\bar{\clu}$. If $u(0)\in\clu$, then $u(t)\in\clu$ for all $t\geq 0$. Furthermore, $v_1(u(t))\leq \exp(\lambda k t)[1+v_1(u(0))]$.
\end{lemma}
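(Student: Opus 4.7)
The plan is to derive the lemma essentially for free from the Taylor-type pointwise bound \eqref{eqn:taylorEst} established in the previous lemma. Since each $u_j(t)$ is dominated by $\sum_{i=0}^j u_i(0)(\lambda k t)^{j-i}/(j-i)!$, summing over $j\geq 1$ and swapping the order of summation (justified by Tonelli's theorem, as all terms are nonnegative) would give
\[
v_1(u(t)) \;\leq\; \sum_{i=0}^{\iy} u_i(0) \sum_{j=\max(1,i)}^{\iy} \frac{(\lambda k t)^{j-i}}{(j-i)!}.
\]
The inner sum evaluates to $e^{\lambda k t}$ for $i\geq 1$ and to $e^{\lambda k t}-1$ for $i=0$; using $u_0(0)=1$, the right-hand side collapses to $(e^{\lambda k t}-1)+e^{\lambda k t}\,v_1(u(0))$, which is at most $e^{\lambda k t}(1+v_1(u(0)))$, matching the bound stated in the lemma.

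This single computation delivers both conclusions at once: whenever $v_1(u(0))<\iy$, the right-hand side is finite, which forces $v_1(u(t))<\iy$ and hence $u(t)\in\clu$ for every $t\geq0$. The approach avoids any Gronwall estimate or truncation-passage-to-limit because the preceding lemma has already absorbed all of the real analytic work, reducing the differential inequality $\dot{u}_j(t)\leq \lambda k u_{j-1}(t)$ (which itself invokes $f(0)=0$, $f(1)=k$ and the convexity of $f$ recorded in Lemma 2 of \cite{li2016mean}) to an explicit termwise estimate. The only point to be careful about is organizational: the $i=0$ term must be split off before combining, since the constant boundary value $u_0\equiv 1$ contributes an extra additive $(e^{\lambda k t}-1)$ that would otherwise be mistakenly absorbed into $v_1(u(0))$. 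There is no real obstacle here; the hard work was front-loaded into the preceding lemma, and what remains is a routine double-sum manipulation plus the elementary identity $\sum_{m\geq 0}(\lambda k t)^m/m!=e^{\lambda k t}$.
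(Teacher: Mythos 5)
Your argument is correct and is exactly the route the paper takes: the paper's proof of this lemma consists of the single sentence that it follows immediately from the estimate \eqref{eqn:taylorEst}, and your double-sum computation (with the $i=0$ term split off) is the intended justification, carried out explicitly. No issues.
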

\begin{proof}
This follows immediately from the estimate \eqref{eqn:taylorEst}.
\end{proof}

The following monotonicity property of the full system \eqref{eqn:sODE} is an immediate consequence of Lemma \ref{lem:truncMon} and part $(ii)$ of Lemma \ref{lem:trunctoLim}.
\begin{lemma}\label{lem:monotone}
Let $u^{(1)}$ and $u^{(2)}$ be a solutions to \eqref{eqn:sODE} in $\bar{\clu}$ with $u^{(1)}_j(0)\geq u^{(2)}_j(0)$ for all $j\in\NN_0$. Then $u^{(1)}_j(t)\geq u^{(2)}_j(t)$ for all $j=\NN_0$ and all $t\geq0$.
\end{lemma}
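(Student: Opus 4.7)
The plan is to leverage Lemma \ref{lem:trunctoLim}(ii), which expresses the solution to the full system \eqref{eqn:sODE} as a pointwise limit of solutions to the truncated systems \eqref{eqn:truncSys}-\eqref{eqn:truncInit} with boundary value $c=0$, together with the monotonicity of the truncated system in its initial data established in Lemma \ref{lem:truncMon}. The strategy is to approximate both $u^{(1)}$ and $u^{(2)}$ by the same family of truncated problems (in particular, sharing the boundary value $c=0$), then pass the resulting chain of inequalities to the limit.

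Concretely, for each $K \in \NN$, I would let $s^{K,(m)}$ denote the solution to \eqref{eqn:truncSys}-\eqref{eqn:truncInit} with $c=0$ and initial data $s^{K,(m)}_j(0) = u^{(m)}_j(0)$, $j = 1, \ldots, K$, $m = 1, 2$. The hypothesis $u^{(1)}_j(0) \geq u^{(2)}_j(0)$ for all $j\in\NN_0$ immediately yields $s^{K,(1)}_j(0) \geq s^{K,(2)}_j(0)$ for $j = 1, \ldots, K$, while the boundary values satisfy $s^{K,(1)}_{K+1}(t) = 0 = s^{K,(2)}_{K+1}(t)$ for every $t \geq 0$. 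Lemma \ref{lem:truncMon} then furnishes $s^{K,(1)}_j(t) \geq s^{K,(2)}_j(t)$ for all $t \geq 0$ and $j = 1, \ldots, K+1$.

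The final step is simply to fix $j \in \NN_0$ and $t \geq 0$, and note that by Lemma \ref{lem:trunctoLim}(ii) we have $s^{K,(m)}_j(t) \to u^{(m)}_j(t)$ as $K \to \infty$ (for any $K \geq j$), for $m = 1, 2$. Taking the limit in the inequality $s^{K,(1)}_j(t) \geq s^{K,(2)}_j(t)$ delivers $u^{(1)}_j(t) \geq u^{(2)}_j(t)$, as desired. Because both the approximation scheme and the monotonicity of the truncated flow are already in place, there is no genuine obstacle here; the only point requiring attention is to verify that both $u^{(1)}$ and $u^{(2)}$ are approximated via the \emph{same} boundary value $c=0$, so that the initial-data inequality of Lemma \ref{lem:truncMon} applies uniformly in $K$ before passing to the limit.
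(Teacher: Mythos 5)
Your proposal is correct and follows exactly the route the paper takes: the paper states Lemma \ref{lem:monotone} as an immediate consequence of Lemma \ref{lem:truncMon} and part (ii) of Lemma \ref{lem:trunctoLim}, and you have simply written out that deduction in detail. No issues.
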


With the above lemmas we can now complete the proof of Theorem \ref{thm:stableFluid}.
\begin{proof}[Proof of Theorem \ref{thm:stableFluid}]
Part $(i)$ of the theorem was shown in Lemma \ref{lem:momentBound}. 

Now consider part $(ii)$.
Suppose $g_i\leq \bar{u}_i,\ i\in\NN_0$. 
Then from Lemma \ref{lem:monotone}, it follows that $v_1(u(t))\leq\sum_{i=1}^\iy\bar{u}_i<\iy$. 
If instead, $g_i\geq \bar{u}_i,\ i\in\NN_0$ then from \eqref{eqn:fixedPoint} and noting that $\bar{u}_0=1$ and $f(1)=k$, we have that $\bar{u}_1 = \lambda f^{(L_,k)}(1)/k = \lambda$. 
Thus, from Lemma \ref{lem:monotone} once more, $u_1(t)\geq \lambda$ for all $t\geq 0$, from which it follows that
\begin{equation*}
\dot{v}_1(u(t)) = \lambda f(1)-ku_1(t)
= k(\lambda- u_1(t))
\leq 0.
\end{equation*}
Therefore, in both cases $v_1(u(t))$ is uniformly bounded in $t$. Assume for now that we are in one of these two cases.

We now prove that
\begin{align}\label{eqn:momIntBound}
\int_0^\iy|u_k(t)-\bar{u}_k|dt <\iy
\end{align}
for each $k$. Noting that $f$ has derivative bounded by $L$ (cf. Lemma 2 of \cite{li2016mean}) it will then follow that, for each of these two cases we have the desired convergence
\begin{align}\label{eqn:convergenceK}
\lim_{t\to\iy}|u_k(t)-\bar{u}_k|=0, \text{ for all }k\in\NN_0.
\end{align}
From this, convergence for an arbitrary initial condition will follow on noting that from Lemma \ref{lem:monotone}, $u^{-}(t)\leq u(t)\leq u^{+}(t)$ where $u^{-}$ and $u^{+}$ are the solutions to \eqref{eqn:sODE} with $u^{-}_k(0)= g_k\wedge \bar{u}_k$ and $u^{+}_k(0)= g_k\vee \bar{u}_k$.
Finally, we prove \eqref{eqn:momIntBound} using an inductive argument. It is clear that \eqref{eqn:momIntBound} holds for $k=0$. Now suppose \eqref{eqn:momIntBound} is true for $k-1$, for some $k\geq 1$. Then
\begin{align*}
\dot{v}_k(u(t)) = \lambda f(u_{k-1}(t))-ku_k(t) = \lambda [f(u_{k-1}(t))-f(\bar{u}_{k-1})]-k[u_k(t)-\bar{u}_k]
\end{align*}
and thus
\begin{align*}
v_k(u(t))-v_k(g) = \int_0^t\left( \lambda [f(u_{k-1}(s))-f(\bar{u}_{k-1})]-k[u_k(s)-\bar{u}_k]\right)ds.
\end{align*}
Note that since $v_k(u(t))\leq v_1(u(t))$ we must have that $v_k(u(t))-v_k(g)$ is uniformly bounded in $t$.
From the inductive assumption and appealing again to the boundedness of the first derivative of $f$ it follows that $\sup_{t\in[0,\iy)}\int_0^t \lambda [f(u_{k-1}(s))-f(\bar{u}_{k-1})]ds<\iy$. Therefore \eqref{eqn:momIntBound} is satisfied for $k$ which completes the proof.
\end{proof}

\section{Proof of Theorem \ref{thm:interchange}}\label{sec:interchange}
Note that $\cll_n$ is a probability measure on the set  $\bar{\clu}$ which is a compact set in the product topology. 
Thus, $\{\cll_n\}_{n\in\NN}$ is a tight sequence in $\clp(\bar{\clu})$. Let $\{\cll_{n_k}\}_{k\in\NN}$ be a weakly convergent subsequence with limit point $\cll$. 
Suppose $u^{n_k}(0)$ is distributed according to $\cll_{n_k}$ (we write $u^{n_k}(0)\sim\cll_{n_k}$). 
Then $u^{n_k}(t)\sim\cll_{n_k}$ for all $t\geq 0$. 
By a minor modification of the proof of Theorem 2.2 of \cite{budhiraja2017diffusion} it follows now that $u_{n_k}\Rightarrow u$ in $\DD([0,T],\clu)$ where $u$ solves the ODE \eqref{eqn:sODE} a.s. 
Theorem 2.2 of \cite{budhiraja2017diffusion} proves such a result for the case where the initial occupancy measure $u(0)$ is deterministic. 
However, the extension to the case where the initial conditions are stochastic is straight forward.
Since at any time $t$, $u_{n_k}(t)\sim\cll_{n_k}$, it follows that $u(t)\sim\cll$.
From the fact that $\bar{u}$ is the unique fixed point of \eqref{eqn:sODE} it follows now that $\cll=\del_{\bar{u}}$ and thus $\del_{\bar{u}}$ must be the limit point of every convergent subsequence. This completes the proof of the first statement in Theorem \ref{thm:interchange}. The second statement is immediate on noting that for all $k\in\NN_0$, $\E u^n_k(t)\to\int_\clu u_kd\cll^n(u)$ as $t\to\iy$.
\qed

\bigskip
\noindent {\bf Acknowledgements.}  Research supported in part by the National Science Foundation (DMS-1016441, DMS-1305120) and the Army Research Office (W911NF-14-1-0331). In addition, I would like to thank Professor Amarjit Budhiraja for discussions and advice.
\bibliographystyle{plain}

{\sc
\bigskip
\noindent
E. Friedlander\\
Department of Statistics and Operations Research\\
University of North Carolina\\
Chapel Hill, NC 27599, USA\\
email: ebf2@live.unc.edu

}
\end{document}